\newif\ifrs
\ifrs \usepackage{mathrsfs} \fi  
\newif\ifcol
\newtheorem{theorem}{Theorem}[section]
\newtheorem{lemma}[theorem]{Lemma}
\newtheorem{remark}[theorem]{Remark}
\newtheorem{example}[theorem]{Example}
\numberwithin{equation}{section}
\newtheorem{theorem*}{Theorem}
\newtheorem{ass*}[theorem*]{Assumption}
\newtheorem{note*}[theorem*]{Note}
\newtheorem{lemma*}[theorem*]{Lemma}
\newtheorem{definition*}[theorem*]{Definition}
\newtheorem{proposition*}[theorem*]{Proposition}
\newtheorem{corollary*}[theorem*]{Corollary}
\newtheorem{remark*}[theorem*]{Remark}
\newtheorem{example*}[theorem*]{Example}
\numberwithin{equation}{section}
\newif\ifcol
\newcommand{\colorr}{\color{black}}
\newcommand{\colorn}{\color[rgb]{1,1,1}}
\newcommand{\colorred}{\color[rgb]{0.8,0,0}}
\newcommand{\colorr}{\color{black}}
\newcommand{\colorn}{\color{black}}
\newcommand{\colorred}{\color{black}}
\def\bd{\begin{description}}
\def\ed{\end{description}}
\def\D2{\bbD_{2,\infty-}}
\def\D{{\bf D}}
\def\R{{\bf R}}
\def\yeq{\>=\>}
\def\ygeq{\>\geq\>}
\def\ep{\epsilon}
\def\half{\frac{1}{2}}
\def\down{\downarrow}
\def\halflineskip{\vspace*{3mm}}
\def\nn{\nonumber}
\def\be{\begin{equation}}
\def\ee{\end{equation}}
\def\bea{\begin{eqnarray}}
\def\eea{\end{eqnarray}}
\def\beas{\begin{eqnarray*}}
\def\eeas{\end{eqnarray*}}
\def\bi{\begin{itemize}}
\def\ei{\end{itemize}}
\def\bd{\begin{description}}
\def\ed{\end{description}}
\def\l{\left}
\def\r{\right}
\newcommand{\bbD}{{\mathbb D}}
\newcommand{\bbI}{{\mathbb I}}
\newcommand{\bbN}{{\mathbb N}}
\newcommand{\bbR}{{\mathbb R}}
\newcommand{\bbZ}{{\mathbb Z}}
\begin{document}

\begin{frontmatter}

\title{Divergence of an integral of a process with small ball estimate\tnoteref{mytitlenote}}
\tnotetext[mytitlenote]{This work was in part supported by
Japan Science and Technology Agency CREST JPMJCR14D7;
Japan Society for the Promotion of Science Grants-in-Aid for Scientific Research
No. 17H01702 (Scientific Research); and by a Cooperative Research Program of the Institute of Statistical Mathematics.}

\author[tar]{Yuliya Mishura}
\ead{myus@univ.kiev.ua}

\author[gra,jpn]{Nakahiro Yoshida}
\ead{nakahiro@ms.u-tokyo.ac.jp}

\address[tar]{Taras Shevchenko National University of Kyiv}
\address[gra]{Graduate School of Mathematical Sciences, University of Tokyo}
\address[jpn]{Japan Science and Technology Agency CREST}

\begin{abstract}
The paper contains sufficient conditions on the function $f$ and the stochastic process $X$ that supply the rate of divergence of the integral functional $\int_0^Tf(X_t)^2dt$ at the rate $T^{1-\ep}$
as $T\to\infty$ for every $\ep>0$. These conditions include so called small ball estimates which are discussed in detail. Statistical applications are provided.
\end{abstract}

\begin{keyword}
  integral functional \sep rate of divergence \sep small ball estimate\sep statistical applications
\MSC[2020] 60F15\sep  60G17\sep 60G15\sep 60G22

\end{keyword}

\end{frontmatter}


\section{Introduction}
The problem of convergence or divergence of     perpetual   integral functionals $$\int_0^\infty g(X(t))\,{\mathrm{d}t}$$ for several classes of stochastic   processes and several classes of functions $g$ appears when studying a variety of issues. Let  $X=\{X(t), t\ge 0\}$ be a one-dimensional stochastic process with continuous   trajectories, and let $g:\R\rightarrow \R$ be a   continuous   function. Then for any $T>0$ the integral functional
\begin{equation}\label{integral-functional}
\int_0^T g\big(X(t)\big)\,{\mathrm{d}t}
\end{equation}
is defined. However, its properties and asymptotic behavior as $T\rightarrow \infty$ depend crucially on the properties of the process $X$ and of function $g$. The asymptotic behavior of the integral functional  $\int_0^T g(X(t))\,{\mathrm{d}t}$ is very different  even for one-dimensional Markov processes and depends on their transient or recurrent properties. On the one hand, conditions of existence of the perpetual integral functionals and in the case of non-existence, the rate of divergence,  were studied,  in the stochastic framework,  for various classes of one-  and multidimensional semimartingale and only partially for non-semimartingale stochastic processes, e.g, in   \cite{Ben-Ari, Erickson, Khoshnevisan, KMS, KoMiSi,  KuKuMi1, KuKuMi,  SaYor}. In the papers where the rate of the divergence was studied, corresponding normalizing factors were suitable for the central or other functional limit theorems. On the other hand, this question   arises in the parametric statistical estimation because such integral functionals appear as the denominators of the residual terms, see, e.g., \cite{KMM} for fractional models and \cite{KuMiRa} both for Wiener diffusions and fractional diffusions. In such case we need  the divergence of this integral with some fixed rate and with probability 1, in order to get strongly consistent estimators.  In this connection, the aim of the present  paper is to investigate the rate of convergence of the integral $\int_0^T f(X_t)^2dt$ to infinity as $T\rightarrow \infty$ depending on the properties of the measurable function $f$ and stochastic process $X$. It is well-known that in the case when $X$ is stationary, ergodic and $\mathbb{E} f^2(X_0)$ is finite, then $T^{-1}\int_0^T f(X_t)^2dt\rightarrow \mathbb{E} f^2(X_0)$
as $T\rightarrow \infty$, and then the  rate of divergence is evident, $T^{-1+\epsilon}\int_0^T f(X_t)^2dt\rightarrow \infty$ for any $\epsilon>0$. If process $X$ is not ergodic, the situation is more involved and the conditions on $f$ and   $X$ should be much more complicated. In our approach, these conditions include so called small ball estimates. Since these conditions are interesting both   themselves and from the point of view of various applications, see e.g. \cite{MS}, we consider them even in those examples where the processes are ergodic, see examples \ref{superexa1} and \ref{superexa2}.

The paper is organized as follows. Section \ref{sec_cond} contains the basic conditions for the function $f$ and for the  process $X$. Conditions for $X$ include small ball estimates which are presented in two versions, more mild and then more strong, with various examples. In Section \ref{sec_div} the main divergence theorem is proved, and then two  modifications and several examples are provided. Section \ref{stat_app} presents two statistical applications. Section \label{appn} contains some auxiliary results.

\section{Main conditions. Discussion of small ball estimates}\label{sec_cond}
Let $X=\{X_t, t \geq 0\}$ be a real-valued stochastic process, and $f=f(x): \R \rightarrow \R$ be a measurable function.
Let the following assumption hold:

 (A)  For any $T>0$ the integral
\beas
\bbI_T  &=& \int_0^T f(X_t)^2dt
\eeas

is correctly defined. Our   goal is to establish sufficient conditions on function $f$ and process $X$ that supply the divergence to infinity:

\begin{equation}\label{infdiv}
T^{-1+\ep}\>\bbI_T \to  \infty\quad a.s.
\end{equation}
as $T\to\infty$ for every $\ep>0$.

Since we have two objects involving into the problem: function $f$ and stochastic process $X$, and respective assumptions will be non-trivial, let us consider them separately, with comments and examples.
\subsection{Assumptions on function $f$}
Concerning function $f$, introduce the following notations: denote  the sets  $H_+(x,\eta)=[x,x+\eta)$ and $H_-(x,\eta)=(x-\eta,x]$. Basic assumptions on function $f$ will be as follows.

\begin{itemize}
\item[(A1)]
\begin{itemize}
\item[(i)]
There exist positive constants $K$ and $\eta_*$ such that
\beas
K(\eta):=\inf_{x\in\R}\min_{{\sf H}\in\{H_+,H_-\}}
\sup_{y\in {\sf H}(x,\eta)}\big|f(y)\big| &\geq& \eta^K
\eeas
for every $\eta\in(0,\eta_*)$.

\item[(ii)] Function $f$ is from  class $C^1(\R)$ and for some constant $C_0$ we have that
\beas
|f' (x)| &\leq& C_0(1+|x|)^{C_0}\qquad(x\in\R).
\eeas
\end{itemize}
\end{itemize}

Let us consider the equivalent form  of assumption $(A1), (i)$,   sufficient condition for it and the simplest examples and a counterexample.
\begin{lemma}
\begin{itemize}
\item[(1)] Assumption (A1), (i) is equivalent to any of the following conditions:
\begin{itemize}
\item[(A3)]
There exist positive constants $K,C$ and $\eta_*$ such that
\beas   K_1(\eta):=\inf_{x\in\R}\min_{{\sf H}\in\{H_+,H_-\}}
\sup_{y\in {\sf H}(x,\eta)}\big|f(y)\big|&\geq& C\eta^K \eeas
for every $\eta\in(0,\eta_*)$.
\item[(A4)] There exist positive constants $K$ and $\eta_*$ such that
\beas K_2(\eta):=\inf_{x\in\R}
\sup_{y\in  (x, x+\eta)}\big|f(y)\big| &\geq& \eta^K
\eeas
for every $\eta\in(0,\eta_*)$.
\item[(A5)]  There exist positive constants $K, C$ and $\eta_*$ such that

\beas
K_3(\eta):=\inf_{x\in\R}
\sup_{y\in  (x, x+\eta)}\big|f(y)\big| &\geq& C\eta^K
\eeas
for every $\eta\in(0,\eta_*)$.  \end{itemize}
\item[(2)] Let there exist such positive constants $\eta_0, \delta$ and  $d\in \mathbb{N}$   such that $f\in C^{(d)}(\R)$ and
    \begin{equation}\label{ineq-5} \inf_{x\in \R}\max_{0\le i\le d}\inf_{y\in [x, x+\eta_0]}\big|f^{(i)}(y)\big|  \geq  \delta.\end{equation} Then assumption $(A1), (i)$ holds.\end{itemize}\end{lemma}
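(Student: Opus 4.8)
The plan is to prove part (1) by a short cycle of implications and part (2) by reducing, via part (1), to one of the one-sided conditions and then invoking an elementary finite-difference estimate.

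For part (1), I would first dispose of the two constant-versus-unit pairs (A1),(i)$\Leftrightarrow$(A3) and (A4)$\Leftrightarrow$(A5): these are purely a matter of absorbing the multiplicative constant, since a bound $\ge C\eta^K$ on $(0,\eta_*)$ gives $\ge\eta^{K+1}$ once $\eta<\min(\eta_*,C)$ (as $C\eta^K\ge\eta^{K+1}\Leftrightarrow C\ge\eta$), so enlarging the exponent by one and shrinking $\eta_*$ converts the constant-$C$ form into the unit form, the reverse being the case $C=1$. The genuine content is thus the equivalence of the two-sided version (A1),(i) with the one-sided version. For (A4)$\Rightarrow$(A1),(i) I would use $(x,x+\eta)\subseteq H_+(x,\eta)$ and the fact that $(x-\eta,x)=H_-(x,\eta)\setminus\{x\}$ is itself a forward open interval based at $x-\eta$; hence both $\sup_{H_\pm(x,\eta)}|f|$ dominate $K_2(\eta)\ge\eta^K$, and taking the minimum and then the infimum over $x$ yields (A1),(i). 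For the converse (A1),(i)$\Rightarrow$(A5) I would split the open interval of length $\eta$ as
\[
(x,x+\eta)=H_-\big(x+\tfrac{\eta}{2},\tfrac{\eta}{2}\big)\cup H_+\big(x+\tfrac{\eta}{2},\tfrac{\eta}{2}\big),
\]
apply (A1),(i) at scale $\eta/2$ and base point $x+\eta/2$, and conclude $\sup_{(x,x+\eta)}|f|\ge(\eta/2)^K$, i.e. (A5) with constant $2^{-K}$. Chaining (A1),(i)$\Rightarrow$(A5)$\Leftrightarrow$(A4)$\Rightarrow$(A1),(i) together with (A1),(i)$\Leftrightarrow$(A3) then closes the equivalence.

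For part (2), by part (1) it suffices to verify, say, (A5). Fix $x$ and take $\eta\in(0,\eta_0)$ with $\eta\le1$. Since the maximum in \eqref{ineq-5} is over the finite set $\{0,\dots,d\}$, there is an index $i=i(x)\le d$ with $|f^{(i)}(y)|\ge\delta$ for all $y\in[x,x+\eta_0]$. The key step is the lower bound: if $|f^{(i)}|\ge\delta$ on a closed interval of length $\ell$, then $\sup|f|\ge(2i)^{-i}\delta\,\ell^i$ on that interval. For $i=0$ this is immediate; for $i\ge1$ I would obtain it from the $i$-th finite difference with step $h=\ell/i$, using the mean-value theorem for divided differences in the form $\Delta_h^i f=h^i f^{(i)}(\xi)$ for some interior $\xi$, so that $|\Delta_h^i f|\ge(\ell/i)^i\delta$, while the crude bound $|\Delta_h^i f|\le 2^i\sup|f|$ gives the claim. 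Applying this to the closed subinterval $[x+\eta/4,x+3\eta/4]\subseteq(x,x+\eta)$ (length $\eta/2$, on which $|f^{(i)}|\ge\delta$ still holds), and using $i\le d$ together with $\eta\le1$, yields $\sup_{(x,x+\eta)}|f|\ge C\eta^d$ with $C=(4d)^{-d}\delta$, uniformly in $x$. This is (A5) with $K=d$, and part (1) upgrades it to (A1),(i).

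The main obstacle is the derivative-to-value estimate in part (2): passing from a lower bound on $|f^{(i)}|$ to a lower bound on $\sup|f|$ with the correct power $\eta^i$. The finite-difference route makes this clean and uniform in $x$, but one must ensure the divided-difference nodes lie inside the interval on which $|f^{(i)}|\ge\delta$, handle $i=0$ separately, and accommodate the open/half-open endpoints of (A5) and (A1),(i) by retreating to a slightly smaller closed subinterval. The remaining bookkeeping—forward versus backward intervals and collecting the worst constant over $i\le d$, where $(2i)^{-i}$ is decreasing so the bound is governed by $i=d$—is routine.
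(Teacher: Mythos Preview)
Your argument for part (1) matches the paper's almost exactly: the same absorption of the constant $C$ into the exponent, and the same halving trick $(x,x+\eta)=H_-(x+\eta/2,\eta/2)\cup H_+(x+\eta/2,\eta/2)$ for the nontrivial direction.

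For part (2) your route is genuinely different from the paper's. The paper argues by a recursive subdivision: starting from $|f^{(d)}|\ge\delta$ on $[x,x+\eta_0]$, it shows that on some quarter-subinterval $|f^{(d-1)}|\ge\delta\eta_0/8$, then iterates the step $d$ times, each time shrinking the interval by a factor $4$ and the lower bound by a factor $8$, landing at $|f|\ge\delta\eta_0^d/2^{3d}$ on an interval of length $\eta_0/4^d$. Your approach is a one-shot estimate via the mean-value form of the $i$-th forward difference, $\Delta_h^i f=h^i f^{(i)}(\xi)$, combined with $|\Delta_h^i f|\le 2^i\sup|f|$. This is cleaner and more standard: it avoids the inductive bookkeeping, gives the exponent $K=d$ directly, and makes the uniformity in $x$ transparent. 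The paper's subdivision argument has the slight advantage of being completely elementary (only the ordinary mean-value theorem, applied once per step), whereas yours uses the divided-difference mean-value theorem, which requires $f\in C^{(i)}$---but that is exactly the hypothesis. Both yield (A5) with $K=d$ and an explicit constant depending only on $d$ and $\delta$, so the conclusions coincide.
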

    \begin{proof} \begin{itemize}
    \item[(1)] Let us prove equivalence of conditions $(A1), (i)$  and $(A3)$. Indeed, if $(A1), (i)$ holds then  $(A3)$ holds with $C=1$. Inversely, if $(A3)$ holds with $C\ge 1$ then $(A1), (i)$ obviously holds, and if $(A3)$ holds with $C< 1$, we can take $K'=K+1$ and $\eta_{*}^{'}=\eta_*\wedge C$. Equivalence of $(A4)$ and $(A5)$ can be established similarly.  Now, let us prove equivalence of $(A1), (i)$ and
    $(A4)$.   Indeed, let $(A1), (i)$ hold. Since $K_2(\eta)\ge K(\eta/2)$, then $K_2(\eta)\ge 2^{-K}\eta^K$, whence $(A5)$ consequently $(A4)$ holds.    Inversely, let $(A4)$ hold. Then both values $\inf_{x\in\R}
\sup_{y\in {\sf H_-}(x,\eta)}\big|f(y)\big| \ge K_2(\eta) \geq  \eta^K$ and $\inf_{x\in\R}
\sup_{y\in {\sf H_+}(x,\eta)}\big|f(y)\big| \ge K_2(\eta) \geq \eta^K$, whence $K(\eta)\ge  \eta^K$.
\item[(2)] Let assumption \eqref{ineq-5} hold. Let us fix $x\in\R$. Without loss of generality, assume that $$\inf_{y\in (x, x+\eta_0)}\big|f^{(d)}(y)\big|  \geq  \delta.$$
    If, additionally, $\inf_{y\in (x, x+\eta_0)}\big|f^{(d-1)}(y)\big|  \geq  \frac{\delta \eta}{2^3}, $ then we proceed with $f^{(d-2)}$. If $\inf_{y\in (x, x+\eta_0)}\big|f^{(d-1)}(y)\big|  <  \frac{\delta \eta_0}{2^3}, $ then we check in which of four   intervals $[x, x+\eta_0/4], [x+\eta_0/4, x+\eta_0/2], [x+\eta_0/2, x+3\eta_0/4]$ or $[x+3\eta_0/4, x+ \eta_0]$ there exists a point $y$ satisfying inequality  $\big|f^{(d-1)}(y)\big|  <  \frac{\delta \eta_0}{2^3}.$ Let, for example, $y\in [x+\eta_0/4, x+\eta_0/2].$ Then for any $z\in [x+3\eta_0/4, x+ \eta_0]$ we have that $|f^{(d-1)}(z)-f^{(d-1)}(y)|\ge \frac{\delta\eta_0}{4},$ therefore for any $z\in [x+3\eta_0/4, x+ \eta_0]$ we have that $|f^{(d-1)}(z)|\ge \frac{\delta \eta_0}{2^3}.$ Then we  continue the same way with $|f^{(d-1)}|$, and in the worst  case, the smallest value that we can obtain, is: $|f(z)|\ge \frac{\delta \eta_0^d}{2^{3d}}$ for $z$ in some interval of the diameter  $\frac{\eta_0^d}{2^{2d}}$. However, even this worst case means that we can put in assumption  $\eta_*=\frac{  \eta_0}{2^{2d}}$, $K=d$ and $C=\frac{\delta  }{2^{d}}$ in assumption $(A5)$, which is equivalent to $(A1), (i)$, as it was already established. So, the proof follows.
\end{itemize}\end{proof}
\begin{example} Consider the classes of functions satisfying assumption \eqref{ineq-5}. Obviously, any polynomial function $P_m(x)$ of $m$th power satisfies \eqref{ineq-5}  because at least one of its derivatives is a non-zero constant. Also, any linear combination of the form $\sum_{i=1}^k (a_i\sin(\alpha_i x)+b_i\cos(\beta_i x))$ satisfies this assumption as well as the rational function $\frac{P_m(x)}{Q_n(x)}$ with $m>n$
and $Q_n(x)\not=0$.
An example of $f$ that satisfies $(A1)$ is
\beas
f(x) &=& 1_{\{x\not=0\}}x^3\sin\l(\frac{1}{x}\r).
\eeas
A periodic version
\beas
f(x) &=& 1_{\{x\not\in \pi\bbZ\}}\big(\sin x\big)^3\times\sin\l(\frac{1}{\sin x}\r)
\eeas
is also an example that satisfies $(A1)$ and has infinitely many
clusters of null points in every neighborhood of $\infty$. Exponential function $e^x$ does not satisfy this assumption around $-\infty$.    \end{example}
\subsection{Assumptions on process  $X$, with examples}
The first group of assumptions describes the processes bounded in   $L^{\infty-}$. It is formulated as follows.

\begin{itemize}
\item[ (A2)]
\begin{itemize}
\item[(i)] (H\"{o}lder continuity in $L^{\infty-}$)
$X$ is continuous a.s. and
there exits a positive constant $\rho$ such that
\beas
\sup_{s,t\in\R_+:\>s<t<s+1}\frac{\big\|X_t-X_s\big\|_r}{|t-s|^{\rho}} &<& \infty
\eeas
for every $r>1$.
\item[(ii)] (boundedness in $L^{\infty-}$) $\sup_{t\in\R_+}\|X_t\|_r<\infty$ for every $r>1$.
 \item[(iii)] (relaxed small ball estimate)
 There exist positive constants { $\Delta_*$,} $\gamma$, $\lambda$, $\mu$, $K_1$, $K_2$ and $K_3$,   such that
\beas
\sup_{s\geq 0} \mathbb{P}\bigg[\sup_{t\in[s,s+\Delta]}
\big|X_t-X_s\big|\leq\eta\bigg]
&\leq&
K_1\exp\bigg(-K_2\eta^{-\lambda}\Delta^\mu\bigg)
\eeas
for all  $\Delta{ \in(0,\Delta_*)}$  and $\eta{\in(0, K_3\Delta^\gamma )}$.
 \end{itemize}
\end{itemize}
Why are we considering so complicated ``relaxed small ball estimate'' $ (A2), (iii)$? The reason is that a wide class of processes satisfies this, albeit a little more complicated, but milder condition, while a simpler but more rigid analogue, condition  $ (A2), (iv)$ is satisfied by a narrower one.  However, we will discuss both conditions.

\begin{example}\label{ex-pos}
Consider the class of processes satisfying assumption $ (A2), (iii)$ (relaxed small ball estimate).
In order to do this, let us combine Theorem~4.4 from \cite{LS} with assumptions from \cite{MS}.
More precisely, let $X = \left\{X_t,t\ge0\right\}$ be a centered Gaussian process. We assume now that  its variance distance satisfies two-sided power bounds: there exist $H\in(0,1]$, and $C_1,C_2,C_3>0$ such that for any $s,t\ge0$, $|t-s|\le C_3$ we have that
\begin{equation}\label{eq:(*)}
C_1|t-s|^{2H} \le \mathbb{E} \left(X_t-X_s\right)^2 \le C_2|t-s|^{2H}.
\end{equation}
Let us work within this   assumption. Note that Theorem~4.4 \cite{LS}, in a little bit adapted form, states the following: let
$\left\{Z_t,t\in[0,\Delta]\right\}$
be a centered Gaussian process. Then for any $0<a\le 1/2$ and $\eta>0$
\[
\mathbb{P}\left\{ \sup_{0\le t\le\Delta} \left|Z_t\right| \le \eta \right\} \le \exp\left\{ - \frac{\eta^4}{16a^2 \sum_{2\le i,j \le 1/a} (E\xi_i\xi_j)^2 } \right\},
\]
provided that
$a\sum_{2\le i\le 1/a} E\xi_i^2 \ge 32\eta^2$,
where $\xi_i=Z_{ia\Delta} - Z_{(i-1)a\Delta}$.
Now let us fix $s\ge0$, $\Delta>0$, and put $Z_t = X_{t+s} - X_s, \;0\le t\le\Delta$. Then
\[
\xi_i = Z_{ia\Delta} - Z_{(i-1)a\Delta} = X_{ia\Delta} - X_{(i-1)a\Delta},
\]
and it follows from assumption \eqref{eq:(*)} that
\[
C_1(a\Delta)^{2H} \le\mathbb{ E}\xi_i^2 \le C_2(a\Delta)^{2H},
\]
and so the inequality
$a\sum_{2\le i\le 1/a} \mathbb{E}\xi_i^2 \ge 32\eta^2$
is fulfilled if
$C_1(a\Delta)^{2H} \ge 32\eta^2$, or, that is the same,
\begin{equation}\label{eq:a} a\ge \left(\frac{4\sqrt{2}}{\sqrt{C_1}}\right)^{1/H}\frac{\eta^{1/H}}{\Delta}.\end{equation}
Together with the inequality $a\leq 1/2$ we get that $$\eta\leq C_4\Delta^H,$$
where $C_4=\frac{\sqrt{C_1}}{2^{2+H}\sqrt{2}}.$
Additionally, we assume that the increments of $X$ are positively correlated, more exactly, for any $s_i,t_i\in\R^+$, $i=1,2$, $s_1 \le t_1 \le s_2 \le t_2$
\begin{equation}\label{eq:(**)}
\mathbb{E}\left(X_{t_1}-X_{s_1}\right)\left(X_{t_2}-X_{s_2}\right) \ge0.
\end{equation}
Note that positive correlation immediately implies that
\begin{equation}\label{positive}
\sum_{2 \le i,j \le \frac1a} (\mathbb{E}\xi_i\xi_j)^2
\le \max_{2 \le i,j \le \frac1a}\mathbb{ E}\xi_i\xi_j \mathbb{E}(X_{\Delta+s} - X_s)^2
\le \max_{2 \le i  \le \frac1a} \mathbb{E} \xi_i^2 C_2 \Delta^{2H} \le C_2^2a^{2H} \Delta^{4H}.
\end{equation}
Now put $\Delta_*=C_3$, $\eta_*=1$, $\Delta\le\Delta_*$, $\eta\le C_4\Delta^H$,
$a=C_5\frac{\eta^{1/H}}{\Delta}$, where $C_5=\left(\frac{4\sqrt{2}}{\sqrt{C_1}}\right)^{1/H}$.
Then
\begin{equation}\nonumber
\begin{gathered}
\frac{\eta^4}{16a^2\sum_{2 \le i,j \le \frac1a} (\mathbb{E}\xi_i\xi_j)^2} \ge \frac{\eta^4}{16C_2^2   a^{2+2H} \Delta^{4H}}  \\
\ge \frac{\eta^4}{16C_2^2 C_5^{2+2H} \eta^{2/H+2}\Delta^{2H-2}}
=C_6\frac{\Delta^{2-2H}}{\eta^{2/H-2}},
\end{gathered}
\end{equation}

where $C_6=\frac{1}{16C_2^2 C_6^{2+2H}}$. It means that assumption $(A2), (iii)$ holds with $ \Delta_*=C_3$, $\eta_*=1$, $K_1=1$, $K_2=C_6$, $K_3=C_4$, $\gamma=H$, $\mu=2-2H$, $\lambda=\frac2H-2$.

Evidently, assumptions \eqref{eq:(*)} and \eqref{eq:(**)} hold for fractional Brownian motion with $H>\frac12$.
According to \cite{BGT} and \cite{MS}, a subfractional Brownian motion with $H>\frac12$ also satisfies \eqref{eq:(*)} and \eqref{eq:(**)}. Note, however, that   assumption $(A2), (ii)$ fails for these processes.
\end{example}
 The next example supplies us with four classes of the processes   satisfying  all assumptions $(A2), (i)-(iii)$.
\begin{example}\label{superexa}(Periodic Brownian bridge)
  Consider a process that in some sense is a periodic Brownian bridge. Namely, let $X^{(k)}=\{X^{(k)}_t, t\in[k, k+1)\}$ be a sequence of independent   Brownian bridges, constructed between the points $(k,0)$ and $(k+1,0), k\ge 0$. They satisfy the relation of a form $$X^{(k)}_t=(k+1-t)\int_k^t\frac{dW^{(k)}_u}{k+1-u}, t\in[k, k+1),$$
where $W^{(k)}, k\ge 0$ is a sequence of independent Wiener processes, and let $X_t=X^{(k)}_t, t\in[k, k+1)\}.$ Evidently, we constructed a Gaussian process, and simple calculations show that its characteristics equal
$$\mathbb{E}X_t=0, \; \mathbb{E}\left(X^{(k)}_t\right)^2=(t-k)(k+1-t),\; \mathbb{E}\left(X^{(k)}_t-X^{(k)}_s\right)^2=(t-s)(1+s-t).$$
The middle equality means that   assumption $(A2),  (ii)$ holds, while   last equality means that for $s, t\in [k, k+1)$ and $0\le t-s\le 1/2$ we have that
$$\frac{t-s}{2}\le \mathbb{E}\left(X_t-X_s\right)^2\le t-s.$$
Consider now $t$ and $s$ from neighbor intervals, and let $s\in [k-1,k), t\in [k,k+1)$ and $t-s<1/2$. Then, on the one hand, we have the following relations:
$$\mathbb{E}\left(X_t-X_s\right)^2=\mathbb{E}\left(X^{(k)}_t\right)^2+\mathbb{E}\left(X^{(k-1)}_s\right)^2=t-s-(t-k)^2-(s-k)^2\leq t-s.$$
On the other hand, for $s\leq k\leq t$ we have that $(t-k)^2+(s-k)^2\leq (t-s)^2$, and for $t-s<1/2$ we have the inequality
$$\mathbb{E}\left(X_t-X_s\right)^2\geq t-s-(t-s)^2\geq \frac{t-s}{2}.$$
In particular, it means that $$\mathbb{E}\xi_i^2\ge \frac{a\Delta}{2}\ge 32\eta^2$$ consequently the inequality
$a\sum_{2\le i\le 1/a} \mathbb{E}\xi_i^2 \ge 32\eta^2$ holds provided that $a<1/2$ and $\eta\le \frac{\Delta^{1/2}}{8\sqrt{2}}$. All the relations above supply assumption  $(A2), (i)$ and relations \eqref{eq:(*)} with $\rho=H=1/2$. Note that the increments of $X$ are not positively, but negatively correlated. Consider only interval $[0,1]$, other cases can be treated similarly. On this interval, it is easy to see that for any $0\leq s\leq t\leq u\leq v\leq 1$
$$\mathbb{E}\left(X_t-X_s\right)\left(X_v-X_u\right)=-(v-u)(t-s)<0.$$

In view on negative correlation of increments, we can not apply upper bound \eqref{positive}. However, we can  calculate and evaluate the sum $S:=\sum_{2 \le i,j \le \frac1a} (\mathbb{E}\xi_i\xi_j)^2$ explicitly:
\begin{equation}\begin{gathered}\label{equ: for neg}S=\sum_{2 \le i  \le \frac1a} (\mathbb{E}(\xi_i )^2)^2+\sum_{2 \le i,j \le \frac1a, i\neq j} (\mathbb{E}\xi_i\xi_j)^2 =\left(\frac{1}{a}-1\right)\left(a\Delta(1-a\Delta)\right)^2\\+\left(\left(\frac{1}{a}-1\right)^2
 -\left(\frac{1}{a}-1\right)\right)a^4\Delta^4\le a\Delta^2+a^2\Delta^4.
\end{gathered}\end{equation}
Furthermore, $a<1/2, \Delta<1$, therefore, $S<2a\Delta^2$.
Therefore, taking into account  \eqref{eq:a} with $H=1/2$ and considering $a=C_5\frac{\eta^2}{\Delta}$ with $\eta\le C_4\Delta^{1/2}$, we get
\begin{equation}\begin{gathered}\label{equ: for S}
\frac{\eta^4}{16a^2\sum_{2 \le i,j \le \frac1a} (\mathbb{E}\xi_i\xi_j)^2}=\frac{\eta^4}{16a^2S}  \ge \frac{\eta^4}{32a^3  \Delta^2  } \ge \frac{\Delta}{32C_5^3 \eta^2}.
\end{gathered}\end{equation} It means that assumption $(A2), (iii)$ holds with   $\Delta_*=1, K_1=1, K_2=\frac{1}{32C_5^3}, K_3=C_4, \gamma=1/2, \mu=1, \lambda=2.$
\end{example}
\begin{example}\label{superexa1}(Stationary Ornstein--Uhlenbeck process) Consider even more simple and natural example. Having in mind calculations from Example \ref{superexa}, we can omit some technical details. So, introduce   a stationary Ornstein--Uhlenbeck process of the form $$X_t=\int_{-\infty}^te^{\theta(s-t)}dW_s,$$
where $W$ is a two-sided Wiener process, $\theta>0$. For the technical simplicity, we put $\theta=1.$   Then $\mathbb{E}X_t^2=\frac{1}{2}$, and this process is Gaussian, therefore condition $(A2), (ii)$ holds, $$X_t-X_s=\left(e^{-t}-e^{-s}\right)\int_{-\infty}^se^{z}dW_z+e^{-t} \int_{s}^te^{z}dW_z,$$
for any $s<t$, whence $$\mathbb{E}\left(X_t-X_s\right)^2=1-e^{s-t}.$$
Evidently,on the one hand, $1-e^{s-t}\le t-s$. On the other hand,  we can state  that $1-e^{-x}=e^{-\theta}( t-s)\ge e^{-1}( t-s)$ if  $t-s<1$ (here $\theta\in (-x,0)$). Therefore two-sided inequality \eqref{eq:(*)} holds with $H=1/2$,  and assumption $(A2),  (i)$ holds. In addition, Moreover, for any $s\le t\le u\le v$ we have that
$$\mathbb{E}\left(X_t-X_s\right)\left(X_v-X_u\right)=\frac12\left(e^t-e^s\right)\left(e^{-v}-e^{-u}\right)<0.$$
So, the increments are negatively correlated. Let us evaluate the sum $S$ from   \eqref{equ: for neg}, taking into account that if we choose $\Delta_*=2$ and $a=1/2$, then $a\Delta<1$:
  \begin{equation}\begin{gathered}\label{equ: for neg1}S=\sum_{2 \le i  \le \frac1a} (\mathbb{E}(\xi_i )^2)^2+\sum_{2 \le i,j \le \frac1a, i\neq j} (\mathbb{E}\xi_i\xi_j)^2 =\left(\frac{1}{a}-1\right)\left(1-e^{-a\Delta}\right)^2\\+\frac{1}{2}\left(1-e^{-a\Delta}\right)^2\left(1-e^{a\Delta}\right)^2
  \sum_{2 \le i<j \le \frac1a}e^{-2(j-i)a\Delta}\\ \le \frac{1}{a}a^2\Delta^2+\frac{1 }{2}ea^4\Delta^4\frac{1}{a^2}\le \left(1+\frac{e}{2}\right)a\Delta^2.
  \end{gathered}\end{equation}  Since we got the same upper bound as in the Example \ref{superexa}, up to a constant multiplier,  we can make the same conclusions.
\end{example}
\begin{example}\label{superexa2}(stationary fractional Ornstein-Uhlenbeck process)  Let $H \in (1/2,1)$, and let $B^H=\lbrace B_t^H, t \in \mathbb{R} \rbrace$ be a two-sided fractional Brownian motion with Hurst index $H$, that is, a centered Gaussian process with covariance function
\begin{equation}\nonumber
\mathbb{E} B_t^H B_s^H = \frac12 (|t|^{2H}+ |s|^{2H}- |t-s|^{2H}).
\end{equation}
Let $- \infty \le a < b \le + \infty$, and let a measurable function $h: [a,b] \rightarrow \mathbb{R}$ satisfy assumption
\begin{equation}\nonumber
\int_{[a,b]^2} |h(u)| |h(v)| |u-v|^{2H-2}dudv < \infty.
\end{equation}
Then the integral $\int_{[a,b]} h(z) dB_z^H$ is correctly defined and is a Gaussian random variable with zero mean and variance
\begin{equation}\label{const: C_H}
C_H \int_{[a,b]^2}  h(u) h(v) |u-v|^{2H-2} dudv,\; C_H=H(2H-1).
\end{equation}
In this connection, we can introduce a   fractional Ornstein-Uhlenbeck process $X_t= \int_{-\infty} ^t e^{\theta(s-t)} dB_s^H, $ $\theta>0$, that is a Gaussian stationary process with zero mean. For the technical simplicity, we put $\theta=1.$  Let's calculate some of its quadratic moment characteristics, in order to evaluate the left-hand side of \eqref{positive}. This evaluation includes several  constants whose value is not important, therefore we denote by $C$ various constants whose value can change from line to line and even inside the same line. First, according to \eqref{const: C_H},
\begin{equation}\label{I0}
\begin{gathered}
\mathbb{E}  X_t^2=C_H e^{-2t} \int_{(-\infty,t]^2} e^{u+v} |u-v|^{2H-2} du dv
=\left( u'=-u, v'=-v \right)\\
=C_H e^{-2t} \int_{[-t,\infty)^2} e^{-u'-v'} |u'-v'|^{2H-2} du'dv'\\
=\left(u'+t=z, v'+t=w \right)=C_H \int_{\mathbb{R}_+^2} e^{-z-w} |z-w|^{2H-2}dzdw=:C_HI_0.
\end{gathered}
\end{equation}
Taking into account Gaussian property of $X$, we conclude that  assumption    $(A2),  (ii)$ holds. Further, for any $0 \le s \le t$ applying suitable change of variables
\begin{equation}\nonumber
X_t-X_s=(e^{-t}-e^{-s}) \int_{-\infty}^s e^z dB_z^H+ e^{-t} \int_s^t e^z dB_z^H,
\end{equation}
whence
\begin{equation}\label{differ}
\begin{gathered}
\mathbb{E}  (X_t-X_s)^2=C_H (e^{-t} - e^{-s})^2 \int_{(-\infty,s]^2} e^{u+v} |u-v|^{2H-2}du dv\\
+2C_H e^{-t} (e^{-t} - e^{-s}) \int_{-\infty}^s \int_s^t e^{u+v} |u-v|^{2H-2} dudv
+ C_H e^{-2t} \int_{[s,t]^2} e^{u+v} |u-v|^{2H-2}dudv\\=C_H\Big(\left(e^{s-t}-1\right)^2I_0+2e^{s-t}\left(e^{s-t}-1\right)
\int_{0}^\infty\int_0^{ t-s} e^{-u+v} |u-v|^{2H-2}du dv\\+e^{2s-2t}\int_0^{ t-s}\int_0^{ t-s} e^{u+v} |u-v|^{2H-2}du dv\Big).
\end{gathered}
\end{equation}
Now, on the one hand, taking  into account the above relations, the fact that $0\le 1-e^{-x}\le x$ for $x>0$  and evident relation $e^{s-t}-1<0$, we can get   that for any $0\le s\le t$
\begin{equation}\label{burbur1}
\begin{gathered}
0 \le \mathbb{E}   \left(X_{t} - X_{s} \right)^2
\le C_H\left(  (t-s)^2 I_0+   \int_{[0, t-s]^2} |z-w|^{2H-2} dzdw\right)\\ \le C\left( (t-s)^2 I_0 + (t-s)^{2H}\right).
\end{gathered}
\end{equation}
On the other hand, $2H<2$, therefore $\frac{\left(e^{s-t}-1\right)^2}{(t-s)^{2H}}\rightarrow 0$ as $t\rightarrow s$. Due to Lemma \ref{apl-3}, $$\frac{e^{s-t}\left(e^{s-t}-1\right)
\int_{0}^\infty\int_0^{ t-s} e^{-u+v} |u-v|^{2H-2}du dv}{(t-s)^{2H}}\rightarrow 0$$ as $t\rightarrow s$. Finally,
$$\frac{e^{2s-2t}\int_0^{ t-s}\int_0^{ t-s} e^{u+v} |u-v|^{2H-2}du dv}{(t-s)^{2H}}\rightarrow \frac{1}{H(2H-1)}$$ as $t\rightarrow s$. Then it follows from the limit relations above and \eqref{differ} that
$$\frac{\mathbb{E}  (X_t-X_s)^2}{(t-s)^{2H}} \rightarrow 1$$
as $t\rightarrow s$,
therefore, there exists $d>0$ such that
\begin{equation}\label{burbur2} \mathbb{E}  (X_t-X_s)^2 \ge\frac{ 1 }{2}(t-s)^{2H}
\end{equation}
 for $t-s<d$. It follows from \eqref{burbur1} and  \eqref{burbur2}  that on some interval  we have two sided inequality \eqref{eq:(*)}. In particular, it means that assumption $(A2),(i)$ holds with $\rho=H$. Further, as always,   we are interested in values $s=ia\Delta, t=(i+1)a\Delta, 2 \le i \le \frac1a -1$. In this case we get the following relations
\begin{equation}\nonumber
\begin{gathered}
0 \le \mathbb{E}   \left(X_{(i+1)a \Delta} - X_{i a \Delta} \right)^2  \leq C \left( a^2 \Delta^2 I_0 + (a \Delta)^{2H}\right).
\end{gathered}
\end{equation}
Since $2H<2$, for $a \Delta <1$ we conclude that $(\mathbb{E}  (\xi_i^2))^2 \le C(I_0+1)^2 a^{4H} \Delta ^{4H}$, and the 1st term of the sum   $S$ (see \eqref{equ: for neg}) can be bounded as
\begin{equation}\label{1st_term}
\sum_{2 \le i \le \frac1a}(\mathbb{E}  (\xi_i^2))^2 \le C(I_0+1)^2 a^{4H-1} \Delta^{4H}.
\end{equation}

 On the other hand,   the inequality
$$a\sum_{2\le i\le 1/a} \mathbb{E}\xi_i^2 \ge 32\eta^2$$
is supplied by the inequality  $a\ge \frac{(8\eta)^{1/H}}{\Delta}$.
 Taking into account that we need to consider $a<1/2$, we can put $\Delta_*=2d$ and $\eta\le \frac{\Delta^H}{2^{3+H}}$.

Now, for any $0 \le u \le v \le s \le t$
\begin{equation}\nonumber
\begin{gathered}
\mathbb{E} (X_t-X_s)(X_v-X_u)\\= C_H \Big((e^{-t} - e^{-s})(e^{-v} - e^{-u}) \int_{-\infty}^s \int_{-\infty}^u e^{z+w}|z-w|^{2H-2}dzdw \\
+  (e^{-t} - e^{-s}) e^{-v} \int_{-\infty}^s \int_u^v e^{z+w} |z-w|^{2H-2}dzdw \\+   e^{-t} (e^{-v} - e^{-u}) \int_s^t \int_{-\infty}^u e^{z+w} |z-w|^{2H-2}dzdw\\+  e^{-t-v}   \int_s^t \int_{u}^v e^{z+w} |z-w|^{2H-2}dzdw\Big)\\=
C_H \Big((e^{s-t} - 1)(e^{u-v} - 1) \int_{\mathbb{R}_+^2 }  e^{-z-w}|z-w+s-u|^{2H-2}dzdw \\
+  (e^{s-t} - 1)   \int_{0}^\infty \int_0^{v-u} e^{-z-w} |z-w+s-v|^{2H-2}dzdw \\+     (e^{u-v} - 1) \int_0^{t-s} \int_{0}^\infty e^{-z-w} |z-w+t-u|^{2H-2}dzdw\\+      \int_0^{t-s} \int_{0}^{v-u} e^{-z-w} |z-w+t-v|^{2H-2}dzdw\Big).
\end{gathered}
\end{equation}
Taking into account that we are interested in the values $$u=ia\Delta,\, v= (i+1) a \Delta,\, s=j a \Delta,\, t=(j+1) a \Delta$$ for some $2 \le i < j \le \frac1a$, we get for $\xi_k=X_{(k+1) a \Delta} - X_{k a\Delta}, k =i,j$ that
\begin{equation}\nonumber
\mathbb{E} \xi_i \xi_j = I_{ij}^{(1)}+2I_{ij}^{(2)}+I_{ij}^{(4)},
\end{equation}
where
\begin{equation}\nonumber
\begin{gathered}
I_{ij}^{(1)}=  C_H (e^{-  a \Delta}-1)^2   \int_{\mathbb{R}_{+}^{2}} e^{-z-w} |z-w+(j-i) a \Delta|^{2H-2}dzdw,\\
I_{ij}^{(2)}=C_H  (e^{-a \Delta}-1) \int_0^{\infty} \int_0^{a \Delta} e^{-z-w} |z-w+(j-i) a \Delta|^{2H-2}dzdw,\\
I_{ij}^{(3)}= C_H \int_0^{a \Delta} \int_0^{a \Delta} e^{-z-w} |z-w+(j-i+1) a \Delta| ^{2H-2}dzdw.
\end{gathered}
\end{equation}
According to Lemma \ref{apl-2} from Appendix \ref{appn}, integral $$\int_{\mathbb{R}^2} e^{-z-w} |z-w+(j-i) a \Delta| ^{2H-2}dzdw$$ is bounded by some constant. Therefore, $I_{ij}^{(1)} \le C(a \Delta)^2$. Furthermore, according to the L'H\^{o}spital's rule
\begin{equation}\nonumber
\begin{gathered}\lim_{x\rightarrow 0} x^{-1}\int_0^{\infty} \int_0^{x} e^{-z-w} |z-w+(j-i) x|^{2H-2}dzdw\\=\lim_{x\rightarrow 0}\int_0^{\infty}e^{-x-w} |x-w+(j-i) x|^{2H-2}dzdw=\int_0^{\infty}e^{-w} w^{2H-2}dw.\end{gathered}
\end{equation}
Therefore,
$$I_{ij}^{(2)} \le C (a \Delta)^2.$$
Finally,   and under assumption that $a\Delta\rightarrow 0$
\begin{equation}\nonumber
\begin{gathered}I_{ij}^{(3)} = C_H \int_0^{a \Delta} \int_0^{a \Delta} e^{-z-w} |z-w+(j-i) a \Delta| ^{2H-2}dzdw\\
 \sim C_H \int_0^{a \Delta} \int_0^{a \Delta}   |z-w+(j-i)a \Delta| ^{2H-2}dzdw\\=C_H(a \Delta)^{2H}\int_0^{1} \int_0^{1}   |z-w+(j-i) | ^{2H-2}dzdw \sim C(a \Delta)^{2H},\end{gathered}
\end{equation}
and, according to Lemma \ref{apl-1},  $\int_0^{1} \int_0^{1}   |z-w+(j-i) | ^{2H-2}dzdw$ is bounded by some constant not depending on $j-1$. Due to the fact that $2H<2$, we can state the following: there exists   $\Delta_*>0$ such that for $\Delta<\Delta_*$, due to the fact that $a<1/2$, $\mathbb{E} \xi_i \xi_j>0$.
It means that we are exactly in conditions of Example \ref{ex-pos} and can produce the same conclusions.

\end{example}
\begin{example}\label{tempered} (Tempered fractional Brownian motion)
There are several approaches how to introduce a tempered fractional Brownian motion. For the detail see \cite{AMS, SMC, SaSu}. We shall introduce it as follows.
Let $\theta > 0$, $\alpha > 0$. Consider a process
\begin{equation}\nonumber
Y_t=\int_{-\infty}^{t} e^{-\theta (t-s)} (t-s)^{\alpha} dW_s, t \ge 0.
\end{equation}
Process $Y$ is stationary and Gaussian, with the following characteristics:
\begin{equation}\nonumber
\begin{gathered}
\mathbb{E}Y_t=0,
\mathbb{E}Y_t^2= \int_0^{\infty} e^{-2\theta z}z^{2 \alpha} dz.
\end{gathered}
\end{equation}
As usual, without loss of generality, put $\theta=1.$
Let us calculate
 \begin{equation}\nonumber
\begin{gathered}
\mathbb{E}Y_0Y_t=\mathbb{E}\int_{-\infty}^{0}e^{z} ( -z)^{\alpha}dW_z \int_{-\infty}^{t}e^{z-t} (t-z)^{\alpha}dW_z\\= \int_{-\infty}^{0}e^{z}e^{z-t}( -z)^{\alpha}(t-z)^{\alpha}dz=
 e^{-t}\int_{0}^{\infty}e^{-2z} z^{\alpha}(t+z)^{\alpha}dz\\
 = t^{2\alpha+1}e^{-t}\int_{0}^{\infty}e^{-2zt} z^{\alpha}(1+z)^{\alpha}dz\rightarrow 0
\end{gathered}
\end{equation}

as $t\rightarrow \infty$. So, $Y$
is an ergodic process. We shall not provide the small ball calculations since they are very tedious.

 \end{example}
\begin{remark} All examples are about the case where $t_0=0$. However, since we consider asymptotics of the integral, process $X$ can be arbitrary till some fixed $t_0>0$ and satisfy assumption $(A2)$  after this moment. \end{remark}
Now let us introduce more simple and stronger small ball estimate.
 \begin{itemize}
 \item[(A2), (iv)] (stronger small ball estimate)
  There exist positive constants {$\eta_*$, $\Delta_*$,} $\lambda$, $\mu$, $K_1$ and $K_2$,   such that
\beas
\sup_{s\in\R_+} \mathbb{P}\bigg[\sup_{t\in[s,s+\Delta]}
\big|X_t-X_s\big|\leq\eta\bigg]
&\leq&
K_1\exp\bigg(-K_2\eta^{-\lambda}\Delta^\mu\bigg)
\eeas
for all $\eta{ \in(0,\eta_*)}$ and $\Delta{ \in(0,\Delta_*)}$.
 \end{itemize}
 As one can see, the difference is that in $(A2), (iii)$ $\eta$ is adapted to $\Delta$, and it means that we consider $\eta$ under the curve $\eta=K_3\Delta^\gamma$, while in $(A2), (iv)$ we consider a whole rectangle $\eta{\in(0,\eta_*)}, \Delta{ \in(0,\Delta_*)}$. All previous examples do not work, however, let us consider two other  examples.
 \begin{example}\label{example-process-1} One of the simplest examples of the processes $X$ satisfying assumptions $(A2)$, is $X_t=\xi\varphi(t)$,
where $\xi$ is a random variable satisfying the following conditions:
\begin{itemize}
\item[(j)]  All moments of $\xi$ are uniformly bounded;
\item[(jj)] There exist positive constants   $\lambda_0$,   $K_3\ge 1$ and $K_4$,   such that for any $x>0$
\beas
  \mathbb{P}\left[|\xi|\leq x \right]\leq  K_3\exp\left(-K_4x^{-\lambda_0}\right),
\eeas
\end{itemize}
and function $\varphi$ is periodic with period 2, and equals $\varphi(t)=t1_{\{0\leq t\leq 1\}}+(2-t)1_{\{1\leq t\leq 2\}}.$
In this case process $X$ is continuous since $\varphi$ is continuous, and condition $(i)$ is fulfilled with $\rho=1$ because
$|\varphi(t)-\varphi(s)|\leq |t-s|$, condition $(ii)$ is supplied by $(j)$ because $\varphi$ is a bounded function. Furthermore, $$\sup_{t\in[s,s+\Delta]}
\big|X_t-X_s\big|\geq |\xi|\frac{\Delta}{2}$$
since $\sup_{t\in[s,s+\Delta]}
\big|\varphi(t)-\varphi(s)\big|\geq\frac{\Delta}{2}$, for any $0<\Delta\le 2.$ Hence
\begin{equation}\nonumber
\begin{gathered}
 \sup_{s\in\R_+} \mathbb{P}\bigg[\sup_{t\in[s,s+\Delta]}
\big|X_t-X_s\big|\leq\eta\bigg]=\sup_{s\in\R_+} \mathbb{P}\bigg[
|\xi|\frac{\Delta}{2}\leq\eta\bigg]\\
=\sup_{s\in\R_+} \mathbb{P}\left[
|\xi|\leq\frac{2\eta}{\Delta}\right]
\leq K_3\exp\left(-K_4\left(\frac{ 2\eta }{\Delta}\right)^{-\lambda_0}\right),
\end{gathered}
\end{equation}
and
so condition $(iii)$ follows from $(jj)$ with any $\eta_*>0$, $0<\Delta_*<2$, $\lambda=\mu=\lambda_0$, $K_1=K_3$, $K_2=\frac{K_4}{2^{\lambda_0}}$.
 In this case we have  a small ball estimate in time, but in some sense, uniformly  ball estimate in space. We can modify this example in the following way: let $\Omega=[0,2]$, and consider the same $\xi$ but shift the functions $\varphi$ in a random way, namely, let
$$\widetilde{\varphi}(t,\omega)=\varphi(t+\omega).$$
Then $\sup_{t\in[s,s+\Delta]}
\big|\widetilde{\varphi}(t,\omega)-\widetilde{\varphi}(s,\omega)\big|\geq\frac{\Delta}{2}$, for any $0<\Delta\le 2,$ and we have the same estimates as before.

\end{example}

\section{Divergence theorems}\label{sec_div}
The first result describes the conditions of divergence to infinity for the processes bounded in   $L^{\infty-}$. We prove it under relaxed small ball estimate $(A2),(iii)$, however, this theorem is certainly true if to replace relaxed small ball estimate with $(A2),(iv)$.
\begin{theorem}\label{theo-1} Let the function  $f$ satisfy assumption $(A1), (i)-(ii),$ and the process   $X$ satisfy assumptions  $(A2), (i)-(iii).$

Then \eqref{infdiv} holds, i.e., \beas
T^{-1+\ep}\>\bbI_T &\to& \infty\quad a.s.
\eeas
as $T\to\infty$ for every $\ep>0$.
\end{theorem}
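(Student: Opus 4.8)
The plan is to prove, for each fixed $\delta>0$, that almost surely $\bbI_n \ge n^{1-\delta}$ for all sufficiently large integers $n$; since $\bbI_T$ is nondecreasing in $T$ this transfers to all real $T$, and letting $\delta\downarrow 0$ and comparing exponents yields $T^{-1+\ep}\bbI_T\to\infty$ for every $\ep>0$. By the Borel--Cantelli lemma it therefore suffices to show $\sum_n \mathbb{P}\big(\bbI_n\le n^{1-\delta}\big)<\infty$ for each $\delta>0$.

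The geometric heart of the argument is a deterministic lower bound: on a block $J$ of length $\Delta<\Delta_*$ on which the oscillation $\mathrm{osc}_J X=\max_J X-\min_J X$ is at least $\eta$ (with $\eta<\eta_*$), the integral $\int_J f(X_t)^2\,dt$ is bounded below. Since $X$ is continuous, its range over $J$ is an interval of length $\ge\eta$, hence contains a half-interval $H(X_s,\eta)$; by $(A1),(i)$ there is a $y^\ast$ in it with $|f(y^\ast)|\ge\tfrac12\eta^K$, and by the intermediate value theorem $X$ hits $y^\ast$ at some $\tau\in J$, so $|f(X_\tau)|\ge\tfrac12\eta^K$. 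To convert this single large value into integral mass I will control the modulus of continuity of $X$ together with the local Lipschitz constant of $f$: on a good event $M_n$ on which $\sup_{[0,n]}|X|\le\mu_n$ and $X$ does not move by more than $\kappa_n:=\tfrac14\eta^K/L_n$ over time lags $\le\delta_0$, where $L_n=C_0(1+\mu_n)^{C_0}$ bounds $|f'|$ on $[-\mu_n,\mu_n]$ via $(A1),(ii)$, one gets $|f(X_t)|\ge\tfrac14\eta^K$ for all $t$ within $\delta_0$ of $\tau$, so that $\int_J f(X_t)^2\,dt\ge (\tfrac14\eta^K)^2\delta_0=:c(\eta)$ (with $\mu_n,\delta_0,\kappa_n,L_n$ all chosen as functions of $n$).

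The decisive step is then purely combinatorial, and it is what lets me bypass any independence across blocks. Partition $[0,n]$ into $N=\lceil n/\Delta\rceil$ blocks. On $M_n$, every block with $\mathrm{osc}\ge\eta_n$ contributes at least $c(\eta_n)$ to $\bbI_n$, so the number of such blocks is at most $\bbI_n/c(\eta_n)$. Choosing $\eta_n\downarrow 0$ polynomially so that $c(\eta_n)>\Delta\,n^{-\delta}$ forces, whenever $\bbI_n\le n^{1-\delta}$, that at least one block has $\mathrm{osc}<\eta_n$, i.e. $\sup_{t\in[s,s+\Delta]}|X_t-X_s|<\eta_n$ for some block start $s$. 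A union bound over the $N$ blocks together with the relaxed small-ball estimate $(A2),(iii)$ (legitimate since $\eta_n\le K_3\Delta^\gamma$ for large $n$) gives
\[
\mathbb{P}\big(\{\bbI_n\le n^{1-\delta}\}\cap M_n\big)\ \le\ N\,K_1\exp\!\big(-K_2\,\eta_n^{-\lambda}\Delta^\mu\big),
\]
which is summable because $\eta_n^{-\lambda}$ grows like a positive power of $n$ while $N$ grows only linearly. Finally $\mathbb{P}(M_n^c)$ is made summable by standard Kolmogorov / Garsia--Rodemich--Rumsey modulus estimates fed by $(A2),(i)$ and $(A2),(ii)$, exploiting that both hold in every $L^r$.

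The main obstacle I anticipate is the simultaneous calibration of the scales $\eta_n,\delta_0,\mu_n$ and the moment order $r$. Because $(A1),(ii)$ bounds $|f'|$ only polynomially and $X$ is unbounded, the admissible constant $L_n$ grows with $n$, shrinking $\kappa_n$, $\delta_0$, and the per-block yield $c(\eta_n)$; I must check that $\eta_n$ can still be sent to $0$ fast enough that $c(\eta_n)>\Delta\,n^{-\delta}$, yet slowly enough that $\eta_n^{-\lambda}\to\infty$ polynomially and beats the union factor $N$, while keeping $\mathbb{P}(M_n^c)$ summable. This is where the small-ball exponents $(\lambda,\mu,\gamma)$, the growth rate $C_0$ of $f'$, and the Hölder index $\rho$ must be shown to be compatible; everything else---the reduction, the intermediate-value geometry, and the Borel--Cantelli conclusion---is routine.
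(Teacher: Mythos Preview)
Your proposal is correct and follows essentially the same route as the paper: partition $[0,n]$ into blocks, use $(A1)(i)$ and the intermediate value theorem to locate a point of large $|f(X_\tau)|$ on each block whose oscillation exceeds $\eta_n$, use $(A1)(ii)$ together with Garsia--Rodemich--Rumsey and $(A2)(i)$--$(ii)$ to extend this to an interval of polynomially small length, bound the probability that some block has small oscillation by a union bound and the small-ball estimate $(A2)(iii)$, and conclude by Borel--Cantelli. The only differences are cosmetic---the paper shrinks the block length $\Delta_n=n^{-\ep(1)}$ rather than keeping it fixed, packages the modulus/boundedness control as per-block events $B^n_j$ rather than one global event $M_n$, and carries out your ``main obstacle'' calibration explicitly via parameters $\ep(0),\dots,\ep(4)$ of the form $\delta^{k}$ with $\delta\downarrow0$.
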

\begin{proof}
Let $\ep>0$.
Fix positive numbers $\ep(i)$ ($i={0,}1,2,3,4$) such that
\bea\label{20181129-5}\begin{gathered}
{ \lambda\ep(0)>\mu\ep(1),\qquad \ep(0)> \gamma\ep(1), \qquad \ep(1)<\ep(2),\qquad}
 \ep(3) \><\> \frac{\ep(2)\rho}{2},\\
\qquad
{ 
K\ep(0)\><\>\ep(3)-C_0\ep(4)
}
\end{gathered}\eea

and that
\beas
{ 2K\ep(0)-}\ep(1)+\ep(2)&<&\half\ep.
\eeas
Such numbers $\ep(i)$ $(i=0,1,2,3,4)$ exist; for example, let $\delta\down0$ for
\beas\label{contr}
\ep(0)\yeq\delta^4,\quad \ep(1)\yeq\delta^5,\quad \ep(2)\yeq\delta,\quad
\ep(3)\yeq\delta^2,\quad\ep(4)\yeq\delta^3.
\eeas

 Let $\eta_n=n^{-\ep(0)}\wedge \left(K_3n^{-\gamma\ep(1)}\right)$. The 2nd value, $K_3n^{-\gamma\ep(1)}$ is necessary in order to apply assumption $(iii)$, the relaxed small ball estimate. However, we shall deal with the 1st value, $n^{-\ep(0)}$, therefore, assume that $n$ is sufficiently large, such that $$\log n> \frac{\log \left(\frac{1}{K_3}\right)}{\epsilon(0)-\gamma\epsilon(1)}.$$  In this case $n^{-\ep(0)}< K_3n^{-\gamma\ep(1)}$, and
 $\eta_n=n^{-\ep(0)}$. Then
\beas
{  c_n}&:=& \frac{1}{4}\min\bigg\{
\inf_{x\in\R}\sup_{y\in H_+(x,{ \eta_n})}\big|f(y)\big|,\>
\inf_{x\in\R}\sup_{y\in H_-(x,{ \eta_n})}\big|f(y)\big|
\bigg\}
 \>\geq\>{  \frac{1}{4}\eta_n^K}
\eeas
for large $n$.
Let
$\Delta_n=n^{{ -\ep(1)}}$,
$s^n_j=(j-1)\Delta_n$ and $t^n_j=j\Delta_n$ for $j,n\in\bbN$.
Let $I^n_j=[s^n_j,{  s^n_j+\Delta_n/2}]$.
%
Let
\beas
A^n_j &=&
\bigg\{\sup_{s,t\in I^n_j}\big|X_t-X_s\big|>{ \eta_n}\bigg\}.
\eeas
For $\omega\in A^n_j$, there exist $\tau(\omega)$, $\sigma(\omega)\in I^n_j$ such that
$\sigma(\omega)<\tau(\omega)$ and that
$\big|X_{\tau(\omega)}(\omega)-X_{\sigma(\omega)}(\omega)\big|>{\eta_n}$.
Therefore, by the mean-value theorem, 
if $X_{\sigma(\omega)}(\omega)<X_{\tau(\omega)}(\omega)$, then
\beas
\max_{t\in I^n_j}|f(X_t(\omega))|
&\geq&
\sup_{t\in[\sigma(\omega),\tau(\omega)]}|f(X_t(\omega))|
\ygeq
\sup_{x\in{\sf H}_+(X_{\sigma(\omega)},\eta_n)}\big|f(x)\big|
\ygeq
4{ c_n}.
\eeas
Similarly, if $X_{\tau(\omega)}(\omega)<X_{\sigma(\omega)}(\omega)$, then we consider ${\sf H}_-(X_{\sigma(\omega)},\eta_n)$ and conclude that
\bea\label{20181128-1}
\max_{t\in I^n_j}|f(X_t(\omega))|\geq4{ c_n}.
\eea
Thus, inequality (\ref{20181128-1}) is always valid for $\omega\in A^n_j$.

Let $\beta=\frac{2\ep(3)}{\ep(2)}$ and let $r>(\rho-\beta)^{-1}$, equivalently,
$\rho-\frac{1}{r}>\beta$.
By $(A2)$ (i),
\beas
\mathbb{E}\big[|X_t-X_s|^r\big]
&\leq&
B(r)|t-s|^{r\rho}\qquad(t\in[s,s+1])
\eeas
where
\beas
B(r)=\bigg(\sup_{s,\; t\in\R_+:\>s<t<s+1}\frac{\big\|X_t-X_s\big\|_r}{|t-s|^{\rho}}\bigg)^r.
\eeas
Then by the Garsia-Rodemich-Ramsey  inequality, there exists a constant $C(r)$ (independent of $s$) such that
\bea\label{20181129-6}
\mathbb{P}\bigg[\sup_{t_1,\;t_2\in[s,s+1]:\>t_1\not=t_2}
\frac{|X_{t_2}-X_{t_1}|}{|t_2-t_1|^\beta}\geq h\bigg]
&\leq&
\frac{C(r)B(r)M}{h^r}
\eea
for all $h>0$, where
\begin{equation}\label{MM}
M  =
\int_{[0,1]}\int_{[0,1]}|t_2-t_1|^{r\rho-r\beta-2}dt_1dt_2;
\end{equation}
$M$ is finite since $r\rho-r\beta-2>-1$.
Since
\beas
\sup_{t\in [s,s+n^{-\ep(2)}]}\frac{|X_t-X_s|}{|t-s|^\beta}
&\geq&
n^{\ep(2)\beta}\sup_{t\in [s,s+n^{-\ep(2)}]}|X_t-X_s|,
\eeas
by setting $h=n^{\ep(3)}$ in (\ref{20181129-6}), and taking into account the definition of $\beta$, we obtain
\bea\begin{gathered}\label{20181129-7}
\sup_{s\in\R_+}\mathbb{P}\bigg[\sup_{t\in [s,s+n^{-\ep(2)}]}|X_t-X_s|\geq n^{-\ep(3)}\bigg]
 \\ \leq
\sup_{s\in\R_+}\mathbb{P}\bigg[\sup_{t\in [s,s+n^{-\ep(2)}]}\frac{|X_t-X_s|}{|t-s|^\beta}\geq n^{-\ep(3)+\ep(2)\beta}\bigg]
 \\    =
 \sup_{s\in\R_+}\mathbb{P}\bigg[\sup_{t\in [s,s+n^{-\ep(2)}]}\frac{|X_t-X_s|}{|t-s|^\beta}\geq n^{\ep(3)}\bigg]\leq
\frac{C(r)B(r)M}{n^{\ep(3)r}}
\end{gathered}\eea
for all $n\in\bbN$.

\begin{en-text}
\beas
f(y)-f(x)
&=&
\int_0^1 \partial_x f ((1-s)x+sy)ds(y-x)
\eeas
Let
\beas
\xi^n_j &=& \inf\big\{t\geq \min I^n_j;\>\big|f(X_t)\big|\geq2c\big\}
\eeas
\end{en-text}
%

Obviously,
\begin{equation}\begin{gathered}\label{201812060238}
\mathbb{P}\left[\sup_{t\in[s,s+n^{-\ep(2)}]}\big|f(X_t)-f(X_{s})\big|\geq { c_n}\right]
 \leq
\mathbb{P}\left[|X_s|\geq n^{\ep(4)}\right]
 \\
+\mathbb{P}\bigg[|X_s|\leq n^{\ep(4)},\>\sup_{t\in [s,s+n^{-\ep(2)}]}|X_t-X_s|\leq n^{-\ep(3)},
 \\  \sup_{t\in[s,s+n^{-\ep(2)}]}\big|f(X_t)-f(X_{s})\big|\geq { c_n}\bigg]
 \\
+\mathbb{P}\left[\sup_{t\in [s,s+n^{-\ep(2)}]}|X_t-X_s|\geq n^{-\ep(3)}\right].
\end{gathered}\end{equation}
By choosing a sufficiently large $r$ in (\ref{20181129-7}), we know
\bea\label{20181129-8}
\sup_{s\in\R_+}\mathbb{P}\left[\sup_{t\in [s,s+n^{-\ep(2)}]}|X_t-X_s|\geq n^{-\ep(3)}\right]
&=&
O(n^{-L})
\eea
as $n\to\infty$ for every $L>0$.
Moreover, from $(A2), (ii)$ we have
\bea\label{20181129-8}
\sup_{s\in\R_+}\mathbb{P}\big[|X_s|\geq n^{\ep(4)}\big]
&=&
O(n^{-L})
\eea
as $n\to\infty$ for every $L>0$.
By Taylor's formula applied to $f$, we obtain
\beas
  \sup_{t\in[s,s+n^{-\ep(2)}]} \big|f(X_t)-f(X_{s})\big|
&\leq&
\sup_{t\in[s,s+n^{-\ep(2)}]}C_0\big(1+|X_s|+|X_t-X_s|\big)^{C_0}\big|X_t-X_s\big|
\\&\leq&
C_0(2+n^{\ep(4)})^{C_0}n^{-\ep(3)}
\eeas
whenever
$|X_s|\leq n^{\ep(4)}$ and $\>\sup_{t\in [s,s+n^{-\ep(2)}]}|X_t-X_s|\leq n^{-\ep(3)}$.
We also have $$C_0(2+n^{\ep(4)})^{C_0}n^{-\ep(3)}<{ n^{-K\ep(0)}/4=\eta_n^K/4\leq c_n}$$ for large $n$.
Therefore, under $(A1), (ii)$ and $(A2), (i)-(ii)$,
\bea\label{20181129-3}
\sup_{s\in\R_+}
\mathbb{P}\bigg[\sup_{t\in[s,s+n^{-\ep(2)}]}\big|f(X_t)-f(X_{s})\big|\geq { c_n}\bigg]
&=&
O(n^{-L})
\eea
as $n\to\infty$ for every $L>0$. 
\begin{en-text}
We remark that inequality (\ref{20181129-3}) holds also under $(A1^\sharp), (ii)$ and $(A2), (i)$;
proof is similar but without cutting with the event $\{|X_s|\geq n^{\ep(4)}\}$.
\end{en-text}

Let
\beas
\tau^n_j=\inf\{t\geq s^n_j;\>\big| f(X_t)\big|\geq 4c_n\}\wedge ({ s^n_j+\Delta_n/2}),
\eeas
and let
\bea\label{events-B}
B^n_j
&=&
\left\{\sup_{t\in[\tau^n_j,\tau^n_j+n^{-\ep(2)}]}\big|f(X_t)-f(X_{\tau^n_j})\big|< 3{ c_n}\right\}.
\eea
Let $J_n=\lceil n^{1{+}\ep(1)}\rceil+1$.
Now (\ref{20181129-3}) gives the estimate
\begin{equation}\begin{gathered}\label{20181129-9}
\mathbb{P}\bigg[\bigg(\bigcap_{j=1}^{J_n}B^n_j\bigg)^c\>\bigg]
 \\\leq
3\sum_{j=1}^{J_n}\sum_{i=1}^{\lceil n^{{-}\ep(1)+\ep(2)}\rceil+1}
\mathbb{P}\bigg[\sup_{t\in[s^n_j+(i-1)n^{-\ep(2)},s^n_j+in^{-\ep(2)}]}
\big|f(X_t)
 \\-
f(X_{s^n_j+(i-1)n^{-\ep(2)}})\big|\geq { c_n}\bigg]
=
O(n^{-L})
\end{gathered}\end{equation}
as $n\to\infty$ for every $L>0$.

By $(A2), (iii)$ (recall that $\eta_n<K_3\Delta_n^\gamma$), we have
\beas
\sup_{j\leq J_n} \mathbb{P}\left[\sup_{s,t\in I^n_j}
\left|X_t-X_s\right|\leq{\eta_n}\right]
&=&
O(n^{-L})
\eeas
as $n\to\infty$ for every $L>0$.
That is,
\bea\label{20181129-12}
\mathbb{P}\left[\left(\bigcap_{j\leq J_n}A^n_j\right)^c\>\right] &=& O(n^{-L})
\eea
as $n\to\infty$ for every $L>0$.

 On $A^n_j\cap B^n_j$ we have that $|f(X_t)|\geq c_n$ on the interval of length at least $n^{-\ep(2)}$, therefore
\beas
\int_{[s^n_j,t^n_j]}f(X_t)^2dt
&\geq&
{ c_n}^2n^{-\ep(2)}.
\eeas
Therefore,
\bea\label{20181129-11}
\bbI_n
&\geq&
{ \frac{1}{16}n^{-2K\ep(0)}}\lfloor n^{1{+}\ep(1)}\rfloor n^{-\ep(2)}
\eea
on $\bigcap_{j=1}^{J_{n}} \big(A^n_j\cap B^n_j\big)$.
Thanks to (\ref{20181129-9}), (\ref{20181129-12}) and (\ref{20181129-11})
with the inequality ${ 2K\ep(0)-}\ep(1)+\ep(2)<\ep/2$, we obtain
\beas
\mathbb{P}\left[\bbI_n<n^{1-\half\ep}\right] &=& O(n^{-L})
\eeas
as $n\to\infty$ for every $L>0$.
In particular, by Borel-Cantelli's lemma,
\beas
\mathbb{P}\bigg[\limsup_{n\to\infty}\big\{\bbI_n<n^{1-\half\ep}\big\}\bigg] &=& 0.
\eeas
Therefore,
\bea\label{20181205-1}
\mathbb{P}\left[\lim_{n\to\infty}\big(n^{-(1-\ep)}\bbI_n\big)=\infty\right] &=& 1.
\eea
This shows \eqref{infdiv} for $T=n$ but it is sufficient for proof of the theorem.
\end{proof}

\begin{theorem}\label{theo-1.2}
 The convergence \eqref{infdiv} holds
 if we exclude condition $(A2), (ii)$ of boundedness in $L^{\infty-}$, and instead add the condition

$(A1) (iii)$  There exist constants $Q>0$, $p>0$ and $C>0$ such that
$$|f(x)|\ge Q|x|^p$$
for any $|x|\ge C$.
\end{theorem}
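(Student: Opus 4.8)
The plan is to keep the entire architecture of the proof of Theorem~\ref{theo-1} intact and to surgically replace the one step that used boundedness in $L^{\infty-}$. Inspecting that proof, condition $(A2),(ii)$ was used only to produce the estimate $\sup_{s\in\R_+}\mathbb{P}[|X_s|\geq n^{\ep(4)}]=O(n^{-L})$, which served to cut out the event $\{|X_s|>n^{\ep(4)}\}$ in the decomposition \eqref{201812060238} and thereby to obtain the probabilistic oscillation bound \eqref{20181129-3} for $f$. Everything else — the choice of $\ep(i)$ as in \eqref{20181129-5}, the sequences $\eta_n$, $c_n$, $\Delta_n$, the subdivision $s^n_j$, $t^n_j$, $I^n_j$, the hitting times $\tau^n_j$, the event $A^n_j$ forcing $|f(X_{\tau^n_j})|\geq 4c_n$, the small ball estimate \eqref{20181129-12}, and the concluding Borel--Cantelli argument — carries over verbatim. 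So I only need a replacement for the mechanism that keeps $|f(X_t)|$ large on a short interval after $\tau^n_j$ when $X$ is allowed to be unbounded.

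The key point is that under the newly added $(A1),(iii)$ a large excursion of $X$ is now helpful: whenever $|X_t|\geq C$ one has $f(X_t)^2\geq Q^2|X_t|^{2p}$. I would therefore define the companion event of $A^n_j$ directly through the oscillation of $X$,
\beas
\widetilde B^n_j &=& \left\{\sup_{t\in[\tau^n_j,\tau^n_j+n^{-\ep(2)}]}\big|X_t-X_{\tau^n_j}\big|< n^{-\ep(3)}\right\},
\eeas
replacing \eqref{events-B}, and then split according to the size of $X_{\tau^n_j}$. If $|X_{\tau^n_j}|\leq n^{\ep(4)}$, the deterministic Taylor estimate behind \eqref{20181129-3} applies unchanged and gives $|f(X_t)-f(X_{\tau^n_j})|<c_n$, hence $|f(X_t)|\geq 3c_n$ on $[\tau^n_j,\tau^n_j+n^{-\ep(2)}]$, thanks to $K\ep(0)<\ep(3)-C_0\ep(4)$. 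If instead $|X_{\tau^n_j}|> n^{\ep(4)}$, then on $\widetilde B^n_j$ we have $|X_t|\geq n^{\ep(4)}-n^{-\ep(3)}>C$ for all large $n$, so $(A1),(iii)$ yields $|f(X_t)|\geq Q(n^{\ep(4)}-n^{-\ep(3)})^p\geq c_n$ on the same interval (the left side stays bounded below by a positive constant while $c_n\to0$). In both cases $f(X_t)^2\geq c_n^2\geq \tfrac{1}{16}n^{-2K\ep(0)}$ on an interval of length $n^{-\ep(2)}$ contained in $[s^n_j,t^n_j]$ (containment holds since $\ep(1)<\ep(2)$), so summing over $j$ reproduces \eqref{20181129-11}.

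It remains to bound $\mathbb{P}[(\bigcap_{j\leq J_n}\widetilde B^n_j)^c]$, and here the unbounded regime is in fact cheaper to control: since $\widetilde B^n_j$ is an oscillation event for $X$ alone, its failure probability can be estimated directly from the Garsia--Rodemich--Rumsey bound \eqref{20181129-7}, summed over the $\lceil n^{-\ep(1)+\ep(2)}\rceil+1$ subintervals of length $n^{-\ep(2)}$ covering the range $[s^n_j,s^n_j+\Delta_n/2]$ of the random time $\tau^n_j$, exactly as in the passage to \eqref{20181129-9} but with no appeal to \eqref{20181129-3} nor to any boundedness of $X$. This gives $\mathbb{P}[(\bigcap_{j\leq J_n}\widetilde B^n_j)^c]=O(n^{-L})$ for every $L>0$; together with \eqref{20181129-12} and the lower bound \eqref{20181129-11}, Borel--Cantelli then yields $\mathbb{P}[\lim_{n\to\infty}n^{-(1-\ep)}\bbI_n=\infty]=1$, and the step from $T=n$ to general $T$ is as in Theorem~\ref{theo-1}.

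The step I expect to be most delicate is the treatment of the random time $\tau^n_j$ inside the oscillation control: one must discretize $\tau^n_j$ over the covering subintervals and enlarge the oscillation window slightly so that the deterministic estimate \eqref{20181129-7} applies uniformly in $s$ — precisely the bookkeeping already carried out in deriving \eqref{20181129-9}. The dichotomy itself requires no new probabilistic input beyond $(A2),(i)$ and $(A2),(iii)$, once one notes that the threshold $n^{\ep(4)}$ eventually dominates the constant $C$ in $(A1),(iii)$.
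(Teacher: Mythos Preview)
Your proposal is correct and follows essentially the same route as the paper: both replace $B^n_j$ by a modified event, split according to whether $|X_{\tau^n_j}|$ exceeds $n^{\ep(4)}$, and invoke $(A1)(iii)$ in the large-$|X|$ regime to keep $|f(X_t)|$ bounded below on a short interval. The paper defines its $\widetilde B^n_j$ as $B^n_j\cup\{|X_{\tau^n_j}|\geq n^{\ep(4)}\}$ rather than as your direct $X$-oscillation event, but the substance is identical (and your formulation is arguably tidier, since it makes the inequality $|X_t|\geq n^{\ep(4)}-n^{-\ep(3)}$ immediate on $\widetilde B^n_j$).
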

\begin{proof}  Analyzing proof of Theorem \ref{theo-1}, we can see that condition $(A2), (ii)$ is applied only when we construct the upper bound for the first term
  on  the right-hand side of the inequality  \eqref{201812060238}. In this connection, we can  consider,  instead of the first two terms  on  the left-hand side of \eqref{201812060238}, one term    of the form
\bea\label{201812060228}
 \sup_{s\in\R_+}P\bigg[\sup_{t\in [s,s+n^{-\ep(2)}]}|X_t-X_s|\geq n^{-\ep(3)},\>
 |X_s|\leq n^{\ep(4)}\bigg]
 &=&
 O(n^{-L})
 \eea
It means that instead of \eqref{events-B} we should consider the events
\beas
\widetilde{B}^n_j
&=&
\bigg\{\sup_{t\in[\tau^n_j,\tau^n_j+n^{-\ep(2)}]}\big|f(X_t)-f(X_{\tau^n_j})\big|< 3{ c_n}\bigg\}\bigcup \bigg\{|X_{\tau^n_j}|\geq n^{\ep(4)}\bigg\}.
\eeas
Then the upper bound \eqref{20181129-9} still holds
  for $\widetilde{B}^n_j$ in place of ${B}^n_j$
, and moreover, on $A^n_j\cap \widetilde{B}^n_j$ we have, as before that  $|f(X_t)|\geq c_n$ on the interval of length at least $n^{-\ep(2)}$, otherwise, for sufficiently large $n$
 $$| X_t|\geq n^{\ep(4)}- n^{-\ep(3)}\ge 1/2n^{\ep(4)},$$
   whence  $$|f( X_t)|\geq \frac{Q}{2^p}n^{p\ep(4)},$$
therefore,  for sufficiently large $n$
\beas
\int_{[s^n_j,t^n_j]}f(X_t)^2dt
&\geq&
{ c_n}^2n^{-\ep(2)}\wedge \frac{Q}{2^p}n^{p\ep(4)- \epsilon(1)}{\colorred,}
\eeas
and we can conclude as in Theorem \ref{theo-1}.
\end{proof}

Note that without condition $(A2), (ii)$, the diverging rate obtained here could be far from optimal. This is confirmed by the following statement.
\begin{theorem} Let the function $f(x)=|x|^p, p>1$, and   let the process  $X=\{X_t, t\geq 0\}$ be a real-valued stochastic process, satisfying the following conditions
\begin{itemize}
\item[$(i)$] $X$ is self-similar with index $H\in(0,1)$;
\item[$(ii)$] The random variable $\int_0^1|X_t|^pdt$ satisfies assumption
 $\int_0^1|X_t|^pdt\ge \xi,$ where $\xi$ is a non-negative random variable with bounded density (particularly, $\int_0^1|X_t|^pdt$ itself has a bounded density).
 Then for any $\epsilon\in(0, pH)$ we have that
 \beas
\liminf_{T\to\infty}T^{-1-\ep}\int_0^T |X_t|^p dt &>& 0\qquad a.s.
\eeas
\end{itemize}
 \end{theorem}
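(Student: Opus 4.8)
The plan is to exploit the scaling built into self-similarity to reduce the whole statement to the single random variable $\int_0^1|X_t|^p\,dt$, for which assumption $(ii)$ supplies a lower-tail (small-ball) bound near zero. Write $J_T=\int_0^T|X_t|^p\,dt$, which is nonnegative and \emph{nondecreasing} in $T$. First I would record the exact scaling identity: for each fixed $T>0$, the change of variables $t=Ts$ gives $J_T=T\int_0^1|X_{Ts}|^p\,ds$, and self-similarity of index $H$ yields $\{X_{Ts}\}_{s\in[0,1]}\stackrel{d}{=}\{T^HX_s\}_{s\in[0,1]}$. Since $v\mapsto\int_0^1|v(s)|^p\,ds$ is a measurable functional, this transfers to
$$J_T\stackrel{d}{=}T^{1+pH}J_1,\qquad J_1=\int_0^1|X_s|^p\,ds.$$
This is an identity of marginal laws for each fixed $T$; no joint statement across different values of $T$ is required.

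Next I would turn assumption $(ii)$ into a quantitative lower-tail bound. If $\xi$ has density bounded by some $M$, then $\mathbb{P}[\xi\le x]\le Mx$ for all $x\ge0$, and since $J_1\ge\xi$ we obtain $\mathbb{P}[J_1\le x]\le Mx$. Fix a constant $c>0$ and, along the geometric subsequence $T_n=2^n$, estimate the bad event $\{J_{T_n}\le c\,T_n^{1+\ep}\}$. By the scaling identity its probability equals $\mathbb{P}[J_1\le c\,T_n^{\ep-pH}]$, which is at most $Mc\,2^{n(\ep-pH)}$. Because $\ep<pH$, the exponent $\ep-pH$ is strictly negative, so these probabilities are summable in $n$.

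The first Borel--Cantelli lemma (which needs only summability, not independence) then gives that almost surely $J_{2^n}>c\,2^{n(1+\ep)}$ for all sufficiently large $n$. Finally I would pass from the subsequence to all $T$ by monotonicity: for $T\in[2^n,2^{n+1}]$ one has $J_T\ge J_{2^n}>c\,2^{n(1+\ep)}$ while $T^{1+\ep}\le 2^{(n+1)(1+\ep)}$, whence $T^{-1-\ep}J_T\ge c\,2^{-(1+\ep)}>0$. Taking $\liminf_{T\to\infty}$ yields the claim almost surely.

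The argument is short once the scaling identity is in place, so I do not expect a serious analytic obstacle; the points needing care are that the distributional identity is used only \emph{marginally} (the events $\{J_{2^n}\le c\,2^{n(1+\ep)}\}$ are dependent, but Borel--Cantelli tolerates this) and that the dyadic interpolation loses only the harmless constant factor $2^{-(1+\ep)}$. It is worth noting that, since the same bound holds for every $\ep'\in(\ep,pH)$, one in fact obtains $T^{-1-\ep}J_T\to\infty$ almost surely, the stated $\liminf>0$ being the weaker conclusion actually needed.
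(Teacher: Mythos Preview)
Your proof is correct and follows essentially the same route as the paper: reduce via self-similarity to the marginal law $J_T\stackrel{d}{=}T^{1+pH}J_1$, use the bounded-density assumption as a lower-tail bound on $J_1$, apply Borel--Cantelli along a subsequence, and interpolate by monotonicity of $T\mapsto J_T$. The only cosmetic difference is that the paper runs Borel--Cantelli along the polynomial grid $T=k^\beta$ with $\beta>(H-\ep/p)^{-1}$ (which yields $T^{-1-\ep}J_T\to\infty$ directly), whereas you use the dyadic grid $T=2^n$ and then note the upgrade to $\infty$ at the end.
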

 \begin{proof} Let constant $C>0$ is an upper bound for the density of the random variable $\xi$ from assumption $(ii)$. Then  for any $k\in \mathbb{N}$, $0<\epsilon<pH$, $\beta>0$ and $x>0$ it follows from the self-similarity of the finite-dimensional distributions of $X$ that
\begin{equation*}\begin{gathered}
P_{k,x} := \mathbb{P} \left \{\frac{\int_0^{k^{\beta}} |X_t|^p dt}{k^{\beta(1+\epsilon)}} <x\right \}=
\mathbb{P} \left \{\frac{\int_0^1 |X_{sk^{\beta}}|^p ds}{k^{\beta\epsilon}} <x\right \}\\=
\mathbb{P} \left \{k ^{\beta \left(pH-\epsilon \right)} \int_0^1  |X_s|^p ds  <x \right \} =
\mathbb{P} \left \{\int_0^1  |X_s|^p ds  < \frac{x}{k^{\beta\left(pH-\epsilon \right)}} \right \}\\ \leq
\mathbb{P} \left \{  \xi  < \frac{x^{\frac1p}}{k^{\beta \left(H- \frac{\epsilon}{p}\right)}} \right \}
\leq C\frac{x^{\frac1p}}{k^{\beta \left(H- \frac{\epsilon}{p}\right)}}.
\end{gathered}\end{equation*}

If we choose $\beta > \left( H- \frac \epsilon p \right) ^{-1}$, then $\Sigma_{k \geq 1}P_{k,x}$
converges, and it follows from Borel-Cantelli and the fact that $x>0$ is arbitrary that
\begin{equation*}
\lim_{k\to\infty}  \frac{\int_0^{k^{\beta}} |X_t|^p dt } {k^{\beta\left(1+\epsilon\right)}}=\infty\ \text{a.s.}
\end{equation*}
Further, for any $T \in \left[k^{\beta},(k+1)^\beta) \right]$
\begin{equation*}\begin{gathered}
\frac{\int_0^T |X_t|^p dt}{T^{1+\epsilon}} \geq \frac{\int_0^{k^{\beta}} |X_t|^pdt}{k^{\beta \left(1+\epsilon \right)}} \left( \frac{k}{k+1}\right)^{\beta \left( 1+ \epsilon\right)} \geq \frac{1}{2^{\beta \left(1+\epsilon\right)}} \frac{\int_0^{k^{\beta}}|X_t|^pdt}{k^{\beta \left(1+\epsilon \right)}},
\end{gathered}\end{equation*}
therefore
\begin{equation*}
\liminf_{T\to\infty}\frac{\int_0^T |X_t|^p dt}{T^{1+\epsilon}} = +\infty  \ \text{a.s.}
\end{equation*}
for any $0 < \epsilon <pH$. \end{proof}
{
\begin{example}\rm

For example, for any $p>1$
\beas
\liminf_{T\to\infty}T^{-1-\ep}\int_0^T |B_t^H|^p dt &>& 0\qquad a.s.
\eeas
for any $0<\epsilon<pH$, where $B^H$ is a fractional Brownian motion with Hurst index $H\in(0,1)$. Indeed, $B^H$ is a self-similar process with the index $H$ of self-similarity, and in this case   $$\int_0^1|B_t^H|^pdt\ge  \xi  = |\mathcal{N}(0, \sigma^2)|^p,$$ and
\begin{equation*}
\sigma^2=\frac12 \int_0^1 \int_0^1\left(s^{2H}+u^{2H}-|s-u|^{2H} \right) du ds= \frac{1}{2H+2}.
\end{equation*}
Obviously, $\xi$ has a bounded density. However, in this particular case we can say more and establish the exact rate of convergence. Indeed, consider $\epsilon= pH$. In this case, according to Theorem 3.3 \cite{MR}.
\begin{equation*}
\liminf_{T\to\infty} \frac{\sup_{0 \leq s \leq T } |B_s^H|^p \left(\log \log T \right)^{pH}}{T^{pH}} = c > 0 \ \text{a.s.},
\end{equation*}Therefore,
\begin{equation*}
P_{k,x} \leq 2 \frac{1}{\sigma \sqrt{2 \pi}} \frac{x^{\frac1p}}{k^{\beta \left(H-\frac{\epsilon}{p}\right)}}.
\end{equation*}
where $c$ is a positive constant. Therefore,
\begin{equation*}\begin{gathered}
\liminf_{T\to\infty}\frac{\int_0^T |B_s^H|^p ds}{T^{1+pH}} \leq \liminf_{T\to\infty} \frac{\sup_{0 \leq s \leq t } |B_s^H|^p}{T^{pH}} =0  \ \text{a.s.}
\end{gathered}\end{equation*}
A fortiori, for any $\epsilon > pH$
\begin{equation*}
\liminf_{T\to\infty} \frac{\int_0^T |B_s^H|^p ds}  {T^{\left(1+\epsilon\right)}}=0  \ \text{a.s.}
\end{equation*}

\end{example}

%
%

%
\

\halflineskip
Concluding this section, we will consider a stationary $X$.
Let $g_t=f(X_t)^2$.
Suppose that $\{g_t\}_{t\in\bbR_+}$ is uniformly integrable,
which is satisfied,  for example, under condition $(A2)$, $(ii)$, and that process $X$ is stationary.
Then
\bea
\frac{1}{T}\int_0^T g_t dt
&\to&
Z
\quad a.s.
\eea
as $T\to\infty$ for some nonnegative random variable $Z$
by Birkhoff's individual ergodic theorem;
$Z$ is a random variable measurable to the invariant $\sigma$-field.
Define the event $A$ by
\bea\label{202101311231}
A
&=&
\big\{Z=0
\big\}.
\eea
The family $\big\{T^{-1}\int_0^Tg_tdt\big\}_{T>0}$ of random variables
is uniformly integrable, and hence
\bea\label{202101311230}&&
\lim_{T\to\infty}\frac{1}{T}\int_0^T E[g_t 1_A]dt
\yeq
\lim_{T\to\infty}E\bigg[\frac{1}{T}\int_0^T g_t dt1_A\bigg]
\nn\\&=&
E\bigg[\lim_{T\to\infty}\frac{1}{T}\int_0^T g_t dt1_A\bigg]
\yeq
E[Z1_A]
\yeq 0.
\eea
\begin{en-text}
Therefore
\bea\label{202101311233}
\liminf_{t\in\bbR_+}\int_t^{t+h}E\big[g_t1_A\big] dt&=& 0
\eea
for any $h>0$.
\end{en-text}
Since
$(g_s,1_A)=^d(g_t,1_A)$ for any $s,t\in\bbR_+$ by stationarity of $X$,
(\ref{202101311230}) implies
\bea\label{202101311234}
E\big[g_t1_A\big] &=& 0
\eea
for any $t\in\bbR_+$, and hence,
\bea\label{202101311321}
E\bigg[\int_0^Tg_tdt1_A\bigg] &=& 0
\eea
for any $T\in\bbR_+$.
On the other hand, if
\bea\label{202101311322}
\sup_{T>0}
\int_0^Tg_tdt &>& 0\quad  a.s.,
\eea
then (\ref{202101311321}) is valid only when $P(A)=0$, i.e.,
$Z>0$ a.s. and then $\int_0^Tg_tdt$ diverges at the rate of $T$ a.s.
as $T\to\infty$.
In particular, under the conditions of Theorem \ref{theo-1},
it holds that
\beas
\lim_{T\to\infty}\>\frac{1}{T}\int_0^T g_tdt &>& 0\quad a.s.
\eeas
This supplements Theorem \ref{theo-1}'s result on the divergence of the integral.

\section{Statistical Application}\label{stat_app}

Let us consider two   statistical applications of the divergence results. Namely, let us  consider  Ornstein--Uhlenbeck process $Y=\lbrace  Y_t, t \ge 0 \rbrace$ with unknown drift parameter  $\theta > 0$    that is the solution of the equation
\begin{equation}\nonumber
Y_t=Y_0 - \theta \int_0^t Y_s ds +\int_0^t g_s dW_s,
\end{equation}
where $W=\lbrace W_t, t \ge 0 \rbrace$ is a Wiener process, $g: \mathbb{R}_+ \rightarrow \mathbb{R}$ is a measurable function such that $0 \le c\le |g_s| \le C,\; s\ge 0$. Since $Y$ satisfies assumption $(A2)$ and $f(x)=x^2$ satisfies assumption $(A1)$, we can conclude that both $T^{-1+\epsilon} \int_0^T Y_s^2 ds  \rightarrow \infty$ and $T^{-1+\epsilon} \int_0^T Y_s^2 g_s^2 ds  \rightarrow \infty$ for any $\epsilon>0$ as $T \rightarrow \infty$. Consider the equality
\begin{equation}\nonumber
\int_0^T Y_t d Y_t  = - \theta \int_0^T Y_s^2 ds + \int_0^T Y_s g_s dW_s,
\end{equation}
whence
\begin{equation}\nonumber
\frac{\int_0^T Y_t d Y_t }{\int_0^T Y_s^2 ds} = - \theta + \frac{\int_0^T Y_s g_s dW_s}{\int_0^T Y_s^2 ds}.
\end{equation}
Furthermore, since $ \int_0^T Y_s^2 g_s^2 ds  \rightarrow \infty$, we get from the strong law of large numbers  for martingales that
\begin{equation*}
\frac{\int_0^T Y_s g_s dW_s}{\int_0^T Y_s^2 g_s^2 ds} \rightarrow 0 \ \text{a.s. as } T  \rightarrow \infty.
\end{equation*}
However,  $$ c^2 \le \frac{\int_0^T Y_s^2 g_s^2 ds}{\int_0^T Y_s^2 ds} \le C^2,$$ and it means that
\begin{equation*}
\frac{\int_0^T Y_s g_s dW_s}{\int_0^T Y_s^2 ds} \rightarrow 0 \ \text{a.s. as } T  \rightarrow \infty.
\end{equation*}
We get that $-\frac{\int_0^T Y_s d Y_s}{\int_0^T Y_s^2 ds}$ is a strongly consistent estimator of $\theta$.

Another example can be introduced as follows. Let the processes $X$ and $Y$ be observable, and satisfy the relation
 $$X_t=X_0+\theta\int_0^tg(Y_s)ds+B_t^H,$$
 without any restriction on $\theta\in \R$ and $H\in(0,1)$, but assuming that $g=f^2$, where $f$ satisfies $(A1)$ and $Y$
 satisfies $(A2)$. Then $$\frac{X_T}{\int_0^Tg(Y_s)ds}=\frac{X_0}{\int_0^Tg(Y_s)ds}+\theta+\frac{B_T^H}{\int_0^Tg(Y_s)ds}.$$ According to \cite{KMM},
 $$\frac{B_T^H}{T^{H+\epsilon}}\rightarrow 0$$ a.s. as $T\rightarrow \infty$ for any $\epsilon>0$ while $\frac{\int_0^Tg(Y_s)ds}{T^{H+\epsilon}}\rightarrow \infty$ a.s. for $H+\epsilon\le 1$.
\section{Appendix}\label{appn}
Here we establish some auxiliary results.
\begin{lemma}\label{apl-1}
Let $H \in (1/2,1), x ,y \ge 0$. Then there exists $C>0$ depending only on H such that
\begin{equation}\nonumber
\int_0^x |w-y|^{2H-2}dw \le C x ^{2H-1}.
\end{equation}
\end{lemma}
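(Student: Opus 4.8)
The plan is to evaluate the integral exactly through the primitive of the power function and then bound the outcome, organizing the argument by the location of $y$ relative to the interval $[0,x]$. Set $\beta = 2H-1$, so that $\beta \in (0,1)$ because $H \in (1/2,1)$, and the exponent of the integrand is $2H-2 = \beta - 1 \in (-1,0)$; in particular the singularity of $|w-y|^{2H-2}$ at $w=y$ is integrable. The single elementary inequality I would invoke throughout is the subadditivity of $t \mapsto t^\beta$ on $[0,\infty)$ for $\beta \in (0,1)$, namely $(a+b)^\beta \le a^\beta + b^\beta$ for all $a,b \ge 0$, which follows from concavity of $t^\beta$.

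First I would dispose of the case $y \ge x$, in which the singularity lies outside the interval. There $|w-y| = y-w$ on $[0,x]$, and computing the primitive gives
\[
\int_0^x (y-w)^{2H-2}\,dw = \frac{y^\beta - (y-x)^\beta}{\beta} \le \frac{x^\beta}{\beta},
\]
where the last step applies subadditivity with $a = y-x$ and $b = x$. Note that this bound is independent of $y$, which is exactly what is needed in the application with $x=1$.

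The remaining case is $0 \le y < x$, where the singularity sits inside the interval; this is the only point requiring a little care. Here I would split the integral at $w=y$ and evaluate each piece directly:
\[
\int_0^x |w-y|^{2H-2}\,dw = \int_0^y (y-w)^{2H-2}\,dw + \int_y^x (w-y)^{2H-2}\,dw = \frac{y^\beta + (x-y)^\beta}{\beta}.
\]
Since $0 \le y < x$ forces $y^\beta \le x^\beta$ and $(x-y)^\beta \le x^\beta$, the right-hand side is at most $2x^\beta/\beta$. Combining the two cases, the claim holds with $C = 2/(2H-1)$, which depends only on $H$. The only genuine obstacle is the interior singularity at $w=y$ in the second case, but it is resolved cleanly by the integrability coming from $2H-2 > -1$ together with splitting the domain precisely at $w=y$; no sharper estimate is required.
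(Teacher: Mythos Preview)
Your proof is correct and follows essentially the same route as the paper: you split into the cases $y \ge x$ and $0 \le y < x$ (the paper further separates $y=0$, which you absorb cleanly), evaluate the integral via the primitive of the power function, and bound using the subadditivity inequality $(a+b)^\beta \le a^\beta + b^\beta$ for $\beta = 2H-1 \in (0,1)$, which is equivalent to the paper's use of $|a^\alpha - b^\alpha| \le |a-b|^\alpha$. Both arguments yield the same constant $C = 2/(2H-1)$.
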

\begin{proof}
We consider only $x>0$. Let $y=0$. Then $\int_0^x w^{2H-2}dw = (2H-1)^{-1}x^{2H-1}$. \\
Let $0 < y \le x$. Then
\begin{equation}\nonumber
\begin{gathered}
\int_0^x |w-y|^{2H-2}dw =\int_0^y (y-w)^{2H-2}dw+\int_y^x (w-y)^{2H-2} dw \\
=(2H-1)^{-1}  \left( y^{2H-1} +(x-y)^{2H-1} \right) \le 2 (2H-1)^{-1} x^{2H-1}.
\end{gathered}
\end{equation}
Let $ y \ge x$. Then, since $|a^{\alpha}-b^{\alpha}| \le |a-b|^{\alpha}$ for $\alpha \in (0,1)$, we have that
\begin{equation}\nonumber
\begin{gathered}
\int_0^x |w-y|^{2H-2}dw = \int_0^x (y-w)^{2H-2}dw\\
 =2H-1)^{-1} \big[ y^{2H-1}-(y-x)^{2H-1} \big] \le (2H-1)^{-1} x^{2H-1}.
\end{gathered}
\end{equation}
Lemma is proved.
\end{proof}
\begin{lemma} \label{apl-2}
There exists  such $C>0$ that for any $p>0$
\begin{equation}\nonumber
I_5 :  =  \int_{\mathbb{R}^2_{+}} e^{-z-w} |z-w+p|^{2H-2} dzdw \le C.
\end{equation}
\end{lemma}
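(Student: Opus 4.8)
The plan is to pass to sum-and-difference coordinates, which decouples the weight $e^{-z-w}$ from the singular kernel and reduces the whole estimate to a one-dimensional computation that is manifestly uniform in $p$. Concretely, I would set $u=z-w$ and $v=z+w$, so that $e^{-z-w}=e^{-v}$, the kernel becomes $|u+p|^{2H-2}$ depending on $u$ alone, the first quadrant $\mathbb{R}^2_+$ maps onto the cone $\{v>0,\ -v\le u\le v\}$, and $dz\,dw=\tfrac12\,du\,dv$. This rewrites the double integral as
\[
I_5=\frac12\int_0^\infty e^{-v}\left(\int_{-v}^{v}|u+p|^{2H-2}\,du\right)dv,
\]
so that the problem is entirely carried by the inner integral, which I must bound uniformly in $p>0$.

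Shifting by $t=u+p$, the inner integral equals $\int_{p-v}^{p+v}|t|^{2H-2}\,dt$. Because $2H-2\in(-1,0)$, the kernel $|t|^{2H-2}$ is locally integrable, so the singularity at $t=0$ (which enters the interval exactly when $v>p$, i.e.\ when the singular line $w=z+p$ crosses the relevant strip) is harmless. I would split into the cases $v\le p$ and $v>p$ and use the antiderivative $t^{2H-1}/(2H-1)$. In the first case one bounds $(p+v)^{2H-1}-(p-v)^{2H-1}\le(2v)^{2H-1}$ by subadditivity of $s\mapsto s^{2H-1}$ on $[0,\infty)$ with exponent in $(0,1)$ — the same elementary inequality $a^{\alpha}-b^{\alpha}\le(a-b)^{\alpha}$ already exploited in Lemma~\ref{apl-1}; in the second case one gets $(v-p)^{2H-1}+(v+p)^{2H-1}\le(1+2^{2H-1})v^{2H-1}$. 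Either way the inner integral is at most $C'v^{2H-1}$ with $C'=(1+2^{2H-1})/(2H-1)$ depending only on $H$ and, crucially, independent of $p$.

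Substituting this back and integrating against the exponential weight yields
\[
I_5\le\frac{C'}{2}\int_0^\infty e^{-v}v^{2H-1}\,dv=\frac{C'}{2}\,\Gamma(2H),
\]
which is finite for $H\in(1/2,1)$ (so $2H\in(1,2)$) and depends only on $H$; this is the asserted bound. The only delicate point — and the nearest thing to an obstacle — is the uniformity in $p$: the location $u=-p$ of the singularity moves with $p$, so I must verify that the inner estimate holds for every $p>0$ irrespective of whether the singular point lies inside $[-v,v]$. The two-case computation above is designed precisely to cover this, and the condition $2H-2>-1$ is exactly what keeps the singular contribution of order $v^{2H-1}$ rather than divergent.
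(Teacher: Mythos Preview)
Your argument is correct and rather cleaner than the paper's. The change of variables $u=z-w$, $v=z+w$ is checked correctly (Jacobian $\tfrac12$, image cone $\{0<v,\ |u|\le v\}$), and the case split $v\le p$ versus $v>p$ for the inner integral $\int_{p-v}^{p+v}|t|^{2H-2}\,dt$ handles the moving singularity at $t=0$ exactly as you say: subadditivity of $s\mapsto s^{2H-1}$ in the first case, and the crude bounds $v-p<v$, $v+p<2v$ in the second. The final bound $\tfrac{C'}{2}\Gamma(2H)$ with $C'=(1+2^{2H-1})/(2H-1)$ is explicit and uniform in $p>0$.

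The paper takes a different and more laborious route: it splits $I_5$ according to the sign of $z-w+p$, performs several successive one-variable substitutions, and arrives at a decomposition $I_5=I_6+I_7+I_8$ where each piece is shown to tend to $0$ as $p\to\infty$ (by L'H\^opital-type arguments), with boundedness for bounded $p$ left implicit by continuity. Your sum-and-difference coordinates exploit the product structure $e^{-z-w}=e^{-v}$ to decouple the exponential from the singular kernel in one stroke, which is why the whole thing collapses to a Gamma integral. The trade-off is that the paper's decomposition tracks more closely where the mass sits as $p$ varies (it actually shows $I_5\to 0$ as $p\to\infty$, not just boundedness), whereas your argument gives a single clean uniform constant without any asymptotic analysis. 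For the purpose of the lemma as stated, your approach is preferable.
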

\begin{proof} Let us provide the following transformations:
\begin{equation}\nonumber
\begin{gathered}
I_5 = \int_0^{\infty} e^{-z}  \int_0^{z+p}  e^{-w} (z+p-w)^{2H-2}dwdz+
\int_0^{\infty} e^{-z} \int_{z+p}^{\infty} e^{-w} (w-z-p)^{2H-2}dwdz \\
 =\int_0^{\infty} e^{-z} \int_0^{z+p}  e^{x-z-p} x^{2H-2}dxdz+
\int_p^{\infty} e^{-w} \int_0^{w-p} e^{-z} (w-p-z)^{2H-2}dzdw\\
=e^{-p} \left( \int_0^p e^x x^{2H-2} dx \int_0^{\infty} e^{-2z}dz+
\int_p^{\infty}e^x x^{2H-2} \int_{x-p}^{\infty} e^{-2z}dzdx \right)\\
+\int_p^{\infty} e^{-w} \int_0^{w-p} e^{p-w+x}x^{2H-2}dxdw =:I_6+I_7+I_8.
\end{gathered}
\end{equation}
Since $$\lim_{p \rightarrow \infty} e^{-p} \int_0^p e^x x^{2H-2}dx = \lim_{p \rightarrow \infty}  \frac{e^p p^{2H-2}}{e^p} =0,$$ the value $I_6 = e^{-p} \int_0^p e^x x^{2H-2}dx$ is bounded. Further,
\begin{equation}\nonumber
\begin{gathered}
I_7 = e^{-p} \int_p^{\infty} e^x x^{2H-2} \int_{x-p}^{\infty} e^{-2z}dzdx\\
= \frac12 e^{-p} \int_p^{\infty} e^x e^{-2x+2p}x^{2H-2}dx =\frac12 e^p \int_p^{\infty} e^{-x}x^{2H-2}dx,
\end{gathered}
\end{equation}
and
\begin{equation}\nonumber
\begin{gathered}
\lim_{p \rightarrow \infty} \frac{\int_p^{\infty}e^{-x}x^{2H-2}dx}{e^{-p}} = \lim_{p \rightarrow \infty} \frac{e^{-p}p^{2H-2}}{e^{-p}} = 0,
\end{gathered}
\end{equation}
therefore this value is bounded, too.
Finally,
\begin{equation}\nonumber
\begin{gathered}
I_8= e^p \int_p^{\infty} e^{-2w} \int_0^{w-p} e^x x^{2H-2} dxdw \le (2H-1)^{-1} \int_p^{\infty} e^{-w} (w-p)^{2H-1} dw \\
= (w-p=x)=\\
= (2H-1)^{-1} e^{-p} \int_0^{\infty} e^{-x} x^{2H-1} dx \rightarrow 0, p \rightarrow  \infty.
\end{gathered}
\end{equation}
Therefore, this values is bounded, too. Lemma is proved.
\end{proof}

\begin{lemma} \label{apl-3} For any $H\in(1/2, 1)$ we have the limit relation
$$\lim_{x\rightarrow 0}x^{-1}\int_0^{\infty}\int_0^{x} e^{-u+v}  |u-v|^{2H-2} dudv=\Gamma(2H-1). $$
\end{lemma}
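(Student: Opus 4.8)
The plan is to reduce the two-dimensional limit to a one-dimensional Cesàro average and then evaluate that average by an explicit computation of the inner integral. Writing the double integral as the iterated integral in which the $u$-integration over $[0,\infty)$ (where the factor $e^{-u}$ secures convergence) is performed first, set $g(v)=\int_0^\infty e^{-u+v}|u-v|^{2H-2}\,du$, so that the quantity of interest is $x^{-1}\int_0^x g(v)\,dv$. The claim then follows from the elementary fact that, for a function $g$ locally integrable near $0$ and right-continuous at $0$, one has $\lim_{x\to0^+}x^{-1}\int_0^x g(v)\,dv=g(0^+)$; this is just the fundamental theorem of calculus, equivalently L'H\^ospital's rule applied to $G(x)=\int_0^x g(v)\,dv$ against the denominator $x$. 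Thus it suffices to compute $g$ near $0$ and to show that $g(0^+)=\Gamma(2H-1)$.

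First I would compute $g(v)$ explicitly for $v>0$. Factoring out $e^{v}$ and splitting the $u$-integral at the singular point $u=v$ produces two pieces. On $(v,\infty)$ the substitution $w=u-v$ turns $\int_v^\infty e^{-u}(u-v)^{2H-2}\,du$ into $e^{-v}\int_0^\infty e^{-w}w^{2H-2}\,dw=e^{-v}\Gamma(2H-1)$, the last equality being the definition of the Gamma function, which is legitimate precisely because $2H-1>0$ makes the singularity at $w=0$ integrable and $e^{-w}$ controls the tail. On $(0,v)$ the substitution $w=v-u$ turns $\int_0^v e^{-u}(v-u)^{2H-2}\,du$ into $e^{-v}\int_0^v e^{w}w^{2H-2}\,dw$. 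Restoring the prefactor $e^{v}$ gives the clean identity $g(v)=\Gamma(2H-1)+\int_0^v e^{w}w^{2H-2}\,dw$.

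It then remains to check that the remainder term vanishes as $v\to0^+$. Since $\int_0^v e^{w}w^{2H-2}\,dw\le e^{v}\int_0^v w^{2H-2}\,dw=e^{v}v^{2H-1}/(2H-1)$ and $2H-1>0$, the remainder tends to $0$, so $g(0^+)=\Gamma(2H-1)$. Combining this with the averaging fact from the first paragraph yields the stated limit. As a consistency check, this gives $\int_0^\infty\int_0^x e^{-u+v}|u-v|^{2H-2}\,du\,dv\sim x\,\Gamma(2H-1)$, which is exactly the order used in Example \ref{superexa2} to show that the corresponding cross term, divided by $(t-s)^{2H}$, vanishes (there $x=t-s$ and the extra factor $e^{s-t}(e^{s-t}-1)\sim-(t-s)$ produces $x^{2-2H}\to0$).

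The computation is essentially mechanical, so I do not expect a serious obstacle; the only points requiring a little care are fixing the interpretation of the iterated integral so that the inner $u$-integral converges, and the handling of the moving integrable singularity $|u-v|^{2H-2}$. The latter is precisely what the two substitutions $w=u-v$ and $w=v-u$ are designed to resolve: each converts the shifted singularity into a fixed one at the origin, after which integrability and continuity at $v=0$ follow directly from the hypothesis $H\in(1/2,1)$, i.e. $2H-1\in(0,1)$.
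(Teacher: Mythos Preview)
Your proof is correct and follows essentially the same route as the paper: both reduce the limit via L'H\^ospital/FTC to evaluating $g(x)=\int_0^\infty e^{-u+x}|u-x|^{2H-2}\,du$ as $x\to0^+$. The only cosmetic difference is that the paper performs a single substitution $z=u-x$ to rewrite $g(x)=\int_{-x}^\infty e^{-z}|z|^{2H-2}\,dz$ and lets $x\to0$, whereas you split at $u=v$ and obtain the explicit identity $g(v)=\Gamma(2H-1)+\int_0^v e^{w}w^{2H-2}\,dw$, which makes the limit and the integrability check a bit more transparent.
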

\begin{proof} Applying L'Hospital's rule, we immediately get that
\begin{equation}\nonumber
\begin{gathered}
\lim_{x\rightarrow 0}x^{-1}\int_0^{\infty}\int_0^{x} e^{-u+v}  |u-v|^{2H-2} dudv
=\lim_{x\rightarrow 0}\int_0^{\infty}e^{-u+x}  |u-x|^{2H-2} du\\ =
\lim_{x\rightarrow 0}\int_{-x}^{\infty}e^{-z}  |z|^{2H-2} du=\Gamma(2H-1),
\end{gathered}
\end{equation}
and lemma is proved.
\end{proof}

\bibliographystyle{elsarticle-num}

\end{document}